\author{Rafael Torres}
\title[Generalized complex structures in dimension 4]{Constructions of generalized complex structures in dimension four}
\address{Mathematical Institute - University of Oxford\\ 24-29 St Giles\\Oxford\\OX1 3LB\\United Kingdom}
\email{torres@maths.ox.ac.uk}
\date{November 24th, 2010.}
\keywords{Generalized complex geometry, type change loci, torus surgeries.}
\subjclass[2010]{Primary 53C15, 53D18; Secondary 53D05, 57M50}
\theoremstyle{plain}
\newtheorem{theorem}[equation]{Theorem}
\newtheorem{proposition}[equation]{Proposition}
\newtheorem{lemma}[equation]{Lemma}
\newtheorem{question}[equation]{Question}
\newtheorem{remark}[equation]{Remark}
\theoremstyle{definition}
\newtheorem{definition}[equation]{Definition}
\newtheorem{example}[equation]{Example}
\newcommand{\Z}{\mathbb{Z}}
\begin{document}

\maketitle

In this note, four-manifold theory is employed to study the existence of (twisted) generalized complex structures. It is shown that there exist (twisted) generalized complex structures that have more than one type change loci. In an example-driven fashion, (twisted) generalized complex structures are constructed on a myriad of four-manifolds, both simply and non-simply connected, which are neither complex nor symplectic.

\section{Introduction}

Twisted generalized complex structures are a generalization of complex and symplectic structures introduced by Hitchin \cite{[H]}, and developed by Gualtieri in \cite{[G1], [G]}. The existence of an almost-complex structure is the only known obstruction for the existence of a twisted generalized complex structure on a manifold so far \cite{[G]}. Given that surfaces are K\"ahler manifolds, the question of existence of such a structure becomes non-trivial first in dimension four. In \cite{[CG1], [CG]}, Cavalcanti and Gualtieri showed that generalized complex 4-manifolds form a larger set than symplectic and/or complex manifolds. They proved that a necessary and sufficient condition for the manifolds $m \mathbb{CP}^2 \# n\overline{\mathbb{CP}^2}$ to admit a generalized complex structure, is that they admit an almost-complex one.\\

The endeavor taken in this note is to study the existence of twisted generalized complex structures using 4-manifold theory. A number of non-complex and non-symplectic twisted generalized complex manifolds are produced building on recent constructions of small symplectic 4-manifolds \cite{[BKS], [FPS], [BK3], [AP]} by using the techniques of \cite{[Ta1], [KM], [Go], [ADK], [MMS], [CG1], [CG]} on Seiberg-Witten theory, symplectic and generalized complex geometry. Among the results of the paper, there are the following.\\

\begin{itemize}
\item A (twisted) generalized complex structure can have more than one type change loci.
\item The connected sums 
\begin{center}
$m(S^2\times S^2), r(S^2\times S^2) \# S^3\times S^1$,\\
$m\mathbb{CP}^2 \# n \overline{\mathbb{CP}^2}, r\mathbb{CP}^2 \# s \overline{\mathbb{CP}^2} \# S^3\times S^1$,\\
$L(p, 1)\times S^1 \# k \overline{\mathbb{CP}^2}$
\end{center}
admit a (twisted) generalized complex structure if and only if they have an almost complex structure.  In particular, the generalized complex structures on $m \mathbb{CP}^2 \# n \overline{\mathbb{CP}^2}$ built in this paper are different from the ones constructed in \cite{[CG]}. 
\item Every finitely presented fundamental group is realized by a non-symplectic twisted generalized complex 4-manifold. 
\item Constructions of twisted generalized complex 4-manifolds that do not admit a symplectic nor a complex structure, and that have specific types of fundamental groups. For example, abelian groups, free groups of arbitrary rank, and surface groups. 
\end{itemize}

The organization of the paper is the following. Section \ref{Section 2} contains a short introduction to twisted generalized complex structures. The main results used to equip four-manifolds with such a geometric structure, and a fundamental result on the study of the Seiberg-Witten invariants are stated in Section \ref{Section 3}. Generalized complex structures for spin manifolds are constructed in Section \ref{Section 4}. In Section \ref{Section 4.1}, generalized complex structures on $S^2\times S^2$ with different numbers of type change loci are constructed. The question of existence of a generalized complex structure on the connected sums $(2g - 3)(S^2\times S^2)$ is settled in Section \ref{Section 4.2}, and a preview of existence results of twisted generalized complex structures on non-simply connected manifolds is given in Section \ref{Section 4.3}. In Section \ref{Section 5}, a large class of symplectic 4-manifolds are put together in order to produce generalized complex structures that are neither complex nor symplectic. The sixth section is devoted to the study of the existence of these structures within the non-simply connected realm. In Section \ref{Section 6.1}, it is proven that all finitely presented groups are twisted generalized complex, while being neither symplectic nor complex. The last part of the paper contains non-symplectic, non-complex examples of twisted generalized complex manifolds with abelian, surface and free fundamental groups (Sections \ref{Section 6.2} and \ref{Section 6.3} respectively). The paper ends with questions for further research in Section \ref{Section 7}.

\section{Twisted generalized complex structures}\label{Section 2}

Following the work of Gualtieri in \cite{[G]}, in this section we recall the basic definitions and examples of generalized complex structures.\\

The \emph{Courant bracket} of sections of the bundle $TM \oplus T^{\ast}M$ given by the direct sum of the tangent and cotangent bundles of a smooth manifold $M$ is

\begin{center}
$[X + \xi, Y + \eta]_H := [X, Y] + \mathcal{L}_X \eta - \mathcal{L}_Y \xi - \frac{1}{2} d(\eta(X) - \xi(Y)) + i_Y i_X H$,
\end{center}
where $H$ is a closed 3-form on $M$.\\

The bundle $TM \oplus T^{\ast}M$ can be be equipped with  a natural symmetric pairing of signature $(n, n)$

\begin{center}
$< X + \xi, Y + \eta>:= \frac{1}{2} (\eta(X) + \xi(Y))$
\end{center}

as well.

\begin{definition} A \emph{twisted generalized complex structure $(M, H, \mathcal{J})$} is a complex structure $\mathcal{J}$ on the bundle $TM \oplus T^{\ast}M$ that satisfies the following two conditions.
\begin{enumerate}
\item It preserves the symmetric pairing.
\item Its $+ i$-eigenbundle, $L\subset T_{\mathbb{C}}M \oplus T_{\mathbb{C}}^{\ast}M$, is closed under the Courant bracket.
\end{enumerate}
\end{definition}

The $+i$-eigenbundle $L$ is a maximal isotropic subspace of $T_{\mathbb{C}}M \oplus T_{\mathbb{C}}^{\ast}M$ that satisfies $L \cap \overline{L} = \{ 0\}$. The bundle $L$ can be used to fully describe the complex structure $\mathcal{J}$.
Moreover, a maximal isotropic subspace $L$ may be uniquely described by a line bundle $K \subset \wedge^{\bullet}T_{\mathbb{C}}^{\ast}M$. This complex line bundle $K$ annihilated by the $+i$-eigenvalue of $\mathcal{J}$ is the \emph{canonical bundle of $\mathcal{J}$}.\\

Indeed, the twisted generalized complex structure can be completely described in terms of differential forms. In order for a complex differential form $\rho \in \wedge^{\bullet}T_{\mathbb{C}}^{\ast}M$ to be a local generator of the canonical line bundle $K$ of a twisted generalized complex structure, and thus determine $\mathcal{J}$ uniquely, it is required that $\rho$ satisfies the following three properties at every point $p \in M$.
\begin{itemize}
\item \underline{Algebraic property}: the form can be written as
\begin{center}
$\rho = e^{B + i \omega} \wedge \Omega$,
\end{center}
where $B, \omega$ are real 2-forms, and $\Omega$ is a decomposable complex form.
\item \underline{Non-degeneracy}: the non-vanishing condition
\begin{center}
$(\rho, \overline{\rho}) = \Omega \wedge \overline{\Omega} \wedge (2i\omega)^{n - k} \neq 0$
\end{center}
holds. Here $2n = dim(M)$, and $k = deg(\Omega)$.
\item \underline{Integrability}: the form $\rho$ is integrable in the sense that
\begin{center}
$d\rho + H\wedge \rho = (X + \xi) \cdot \rho$,
\end{center}
for a section $X + \xi$ of $TM \oplus T^{\ast}M$.\\
\end{itemize}

The non-degeneracy requirement is equivalent to the condition $L \cap  \overline{L} = \{0\}$. It implies that at each point of a twisted generalized complex manifold, the real subspace $ker \Omega \wedge \overline{\omega} \subset TM$ inherits a symplectic structure from the 2-form $\omega$, and a transverse complex structure is defined by the annihilator of $\omega$, as the $+i$-eigenspace of a complex structure on $T/ ker \Omega \wedge \overline{\Omega}$. 

\begin{definition} \emph{Type and parity of a twisted generalized complex structure}. Let $\rho = e^{B + i \omega} \wedge \Omega$ be a generator of the canonical bundle $K$ of a generalized complex structure $\mathcal{J}$ at a point $p\in M$. The \emph{type of $\mathcal{J}$ at $p$} is the degree of $\Omega$. The \emph{parity of $\mathcal{J}$} is the parity of its type.
\end{definition}

The type of a twisted generalized complex structure need not be constant, it may jump along a codimension two submanifold. However, the parity of the type does remain the same along connected components of the manifold $M$.

\begin{remark} {\label{Remark 1}} (Twisted) generalized complex structures: on the 3-form $H$ of our constructions. \emph{For the twisted generalized complex structures $(M, H, \mathcal{J})$ produced in this paper, the 3-form $H$ is always given by a generator of $H^3(M; \Z)$. Poincar\'e duality implies that if a 4-manifold is simply connected, then the 3-form satisfies $H = 0$. In this case,  a generalized complex structure is obtained. In particular, the non-simply connected manifolds constructed will have a twisted generalized complex structure. For example, if the fundamental group is $\pi_1(\hat{M}) \cong \Z$, then the 3-form $H$ is a generator of $H^3(M; \Z) \cong \Z$} . 
\end{remark}

Two standard examples of generalized complex manifolds are the following.

\begin{example} \emph{Complex and symplectic manifolds}.\label{Example A}
Let $(M^{2n}, I)$ be a complex manifold. Then $M^{2n}$ is a generalized complex manifold of type $n$. Indeed, we can define on $TM \oplus T^{\ast}M$, the operator

\[ \mathcal{J}_I: =\left( \begin{array}{cc}
-I & 0  \\
0 & I^{\ast}  
\end{array} \right).\] 

In this case, $T^{0, 1}M \oplus T^{\ast 1, 0}M$ is the $+i$-eigenspace of $\mathcal{J}_I$, and the canonical bundle is $K = \wedge^{n, 0}T^{\ast}M$.\\

A symplectic manifold $(M, \omega)$ is a generalized complex manifold of type 0. The operator
\[ \mathcal{J}_{\omega}: =\left( \begin{array}{cc}
0 & - \omega^{-1}  \\
\omega & 0
\end{array} \right)\]  is a complex structure on the bundle $TM \oplus T^{\ast}M$, whose $+i$-eigenspace is given by $\{X - i \omega(X): X \in T_{\mathbb{C}}M\}$; the canonical bundle $K$ is generated by the form $e^{i \omega}$.\\
\end{example}


Regarding the jump on the type of a twisted generalized complex structure, the following is observed. The projection \begin{center}$\wedge^{\bullet}T_{\mathbb{C}}^{\ast}M \rightarrow \wedge^{0}T_{\mathbb{C}}^{\ast}M$\end{center} determines a canonical section $s$ of $K^{\ast}$. In four dimensions, the vanishing of this section forces the type of a twisted generalized complex structure to jump from 0 to $2$. The jump occurs along a 2-torus, which inherits a complex structure.

\begin{definition} A point $p$ in the type-changing locus of a twisted generalized complex structure on a 4-manifold is a \emph{nondegenerate point} if it is a nondegenerate zero of $s\in C^{\infty}(K^{\ast})$.
\end{definition}

Regarding the notion of submanifolds in the generalized complex setting, we have the following.

\begin{definition} \emph{Branes}. Let $(M, H, \mathcal{J})$ be a twisted generalized complex manifold, and let $i: \Sigma \hookrightarrow M$ be a submanifold with a 2-form $F \in \Omega^2(\Sigma)$ that satisfies $dF = i^{\ast}H$, and such that the sub line bundle $\tau_F \subset (TM \oplus T^{\ast}M)|_N$ defined as
\begin{center}
$\tau_F:= \{X + \xi \in T\Sigma \oplus T^{\ast}M: i^{\ast}\xi = i_X F\}$,
\end{center}
is invariant under $\mathcal{J}$, i.e., $\tau_F$ is a complex sub-bundle. Such a submanifold $\Sigma$ is called a \emph{brane}.
\end{definition}

In the case where $M$ is a complex manifold, the definition of a brane coincides with the notion of a complex submanifold. Analogously, for symplectic manifolds, Lagrangian submanifolds provide examples of branes.

\section{Tools and techniques of construction}\label{Section 3}

The surgical methods used in the production of twisted generalized complex structures on 4-manifolds are introduced in this section.

\subsection{Torus surgeries}{\label{Section 3.1}}

Let $T$ be a 2-torus of self intersection zero inside a $4$-manifold $X$. There is a diffeomorphism $N_T \rightarrow T^2 \times D^2$ from its tubular neighborhood $N_T$ to a thick 2-torus $T^2\times D^2$. Let $\{\alpha, \beta\}$ be the generators of $\pi_1(T)$ and consider the meridian $\mu_T$ of $T$ inside $X - T$, and the push offs $S^1_{\alpha}, S^1_{\beta}$ in $\partial N_T \approx T^3$. There is no ambiguity regarding the choice of push offs, since in our constructions the manifold $X$ will be symplectic, the torus $T$ will be Lagrangian, and the push offs are taken with respect to the Lagrangian framing. The loops $S^1_{\alpha}$ and $S^1_{\beta}$ are homologous in $N_T$ to $\alpha$ and $\beta$ respectively. In particular, the set $\{S^1_{\alpha}, S^1_{\beta}, \mu\} $ forms a basis for $H_1(\partial N; \Z) \cong H_1(T^3; \Z)$.\\

The manifold obtained from $X$ by performing a $(p, q, r)$ - torus surgery on $T$ along the curve $\gamma:= p S^1_{\alpha} q S^1_{\beta}$ is defined as\\

\begin{center}
$X_{T, \gamma}(p, q, r) = (X - N_T) \cup_{\phi} (T^2 \times D^2)$,\\
\end{center}

where the diffeomorphism $\phi: T^2\times \partial D^2 \rightarrow \partial (X - N_T)$ used to glue the pieces together satisfies $\phi_*([\partial D^2]) = p[S^1_{\alpha}] + q[S^1_{\beta}] + r[\mu_T]$ in $H_1(\partial (X - N_T)); \Z)$.\\

A few words for the reader to get comfortable with our notation are in order. A $(0, 0, 1)$ -torus surgery on $T$ amounts to carving the 2-torus out, and gluing it back in exactly the same way. Therefore, if one performs  a $(0, 0, 1)$ - torus surgery along  a torus $T$ on $X$, one has $X_{T, \gamma}(0, 0, 1) = X$. Whenever $p = 0 = q$, the surgery coefficients $(p, q, r) = (0, 0, 1)$ are said to be trivial.\\

The Euler characteristic and signature of a 4-manifold are invariant under torus surgeries. If the torus $T$ is essential and the surgery coefficients are nontrivial, then \begin{center}$b_1(X_{T, \gamma}(p, q, r)) = b_1(X) - 1$ and $b_2(X_{T, \gamma}(p, q, r)) = b_2(X) - 2$.\\ \end{center}

Minding the changes on the fundamental group of the manifolds that undergo surgery, we have the following well-known result. We call the push offs $S^1_{\alpha}:= m$ and $S^1_{\beta}:= l$.

\begin{lemma}\label{Lemma BK} The fundamental group of the manifold obtained by applying a $(p, q, r)$ - torus surgery to $X$ on the torus $T$ along the curve $m + l$ is given by the quotient
\begin{center}
$\pi_1(X_{T, \gamma}(p, q, r)) \cong \pi_1(X - T)/N(\mu_T^rm_T^pl_T^q)$,
\end{center}
where $N(\mu_T^rm_T^pl_T^q)$ denotes the normal subgroup generated by $\mu_T^rm_T^pl_T^q$.
\end{lemma}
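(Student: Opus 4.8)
The plan is to compute the fundamental group using the van Kampen theorem applied to the decomposition $X_{T,\gamma}(p,q,r) = (X - N_T) \cup_{\phi} (T^2 \times D^2)$. First I would observe that the two pieces are glued along $\partial N_T \approx T^3$, so I need to understand the inclusion-induced maps on fundamental groups from $T^3$ into each piece. The interior piece $T^2 \times D^2$ deformation retracts onto $T^2 \times \{0\} = T$, so $\pi_1(T^2 \times D^2) \cong \Z^2$ is generated by the classes $[S^1_\alpha]$ and $[S^1_\beta]$, while the meridian $\mu_T$, which bounds the disk $\{pt\} \times D^2$, becomes nullhomotopic. The key relation introduced by the regluing is that the curve $\phi(\partial D^2)$, which bounds a disk in the newly glued solid-torus-bundle piece, is killed; by the defining property of $\phi$ we have $\phi_*([\partial D^2]) = p[S^1_\alpha] + q[S^1_\beta] + r[\mu_T]$.

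Next I would translate this homological statement into the fundamental group by using the notation $m := S^1_\alpha$, $l := S^1_\beta$, writing $m_T, l_T, \mu_T$ for the corresponding elements of $\pi_1(X - T)$. The effect of van Kampen is that $\pi_1(X_{T,\gamma}(p,q,r))$ is the quotient of $\pi_1(X - N_T) \cong \pi_1(X - T)$ by the normal subgroup generated by the image of the attaching curve of the new 2-cell, namely $\mu_T^r m_T^p l_T^q$. This yields precisely
\begin{center}
$\pi_1(X_{T,\gamma}(p,q,r)) \cong \pi_1(X - T)/N(\mu_T^r m_T^p l_T^q)$,
\end{center}
as claimed. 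I would note that $\pi_1(X - N_T) \cong \pi_1(X - T)$ since $N_T$ is a tubular neighborhood and removing it is a homotopy equivalence on the relevant complement.

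The main technical subtlety, and the step I expect to require the most care, is the transition from the homology class $\phi_*([\partial D^2])$ to an honest element of the nonabelian group $\pi_1(X - T)$. The curves $m_T$, $l_T$, $\mu_T$ lie on the torus $T^3 = \partial N_T$, where the fundamental group is abelian $\cong \Z^3$, so the product $\mu_T^r m_T^p l_T^q$ is well defined on $T^3$ independently of ordering; the point is to track their images under the inclusion $T^3 \hookrightarrow X - T$ and verify that the word we kill is the image of the boundary of the attached 2-cell. Since all three generators are represented by loops on the boundary torus where they commute, the word $\mu_T^r m_T^p l_T^q$ is unambiguous as a single element before being normally generated, which is what makes the clean statement above correct. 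I would also remark that this is the standard computation for the fundamental group after a logarithmic transform or torus surgery, so the argument is essentially bookkeeping once the van Kampen setup is in place.
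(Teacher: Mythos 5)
Your argument is correct, but it follows a genuinely different route from the paper's. The paper proves the lemma by handle calculus: $T^2\times D^2$ has a handle decomposition with one $0$-handle, two $1$-handles and one $2$-handle, so regluing it --- via the dual decomposition --- amounts to attaching to $X - N_T$ one $2$-handle, two $3$-handles and one $4$-handle; the $2$-handle is attached along a curve representing $\mu_T^r m_T^p l_T^q$ and produces exactly the stated quotient, while the $3$- and $4$-handles leave $\pi_1$ unchanged. You instead apply Seifert--van Kampen directly to the decomposition $(X - N_T)\cup_\phi (T^2\times D^2)$, amalgamating over $\pi_1(T^3)\cong \mathbb{Z}^3$. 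The two arguments carry the same content (the handle-attachment fact is itself a van Kampen computation), but the bookkeeping differs: the paper outsources the group theory to the standard statement that only $2$-handles affect $\pi_1$, whereas your version is self-contained provided you make explicit one step you compressed, namely why the pushout $\pi_1(X-N_T)\ast_{\mathbb{Z}^3}\mathbb{Z}^2$ collapses to a quotient of $\pi_1(X-N_T)$ alone: the map $\pi_1(T^2\times\partial D^2)\to\pi_1(T^2\times D^2)$ is surjective, so the glued-in piece contributes no new generators, and its kernel is infinite cyclic, generated by $[\{pt\}\times\partial D^2]$, whose image under $\phi$ is $\mu_T^r m_T^p l_T^q$; killing the normal closure of that single element therefore accounts for all of the amalgamation relations. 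One wording slip to correct: your sentence asserting that $\pi_1(T^2\times D^2)\cong\mathbb{Z}^2$ is generated by $S^1_\alpha, S^1_\beta$ with $\mu_T$ bounding describes the original tubular neighborhood $N_T$, not the reglued copy; in the reglued copy the curve that bounds a disk is $\phi(\{pt\}\times\partial D^2)$, and the $T^2$-factor generators are glued to curves determined by the remaining columns of the matrix of $\phi$, not necessarily to $S^1_\alpha, S^1_\beta$. This is immaterial for the conclusion --- whatever those curves are, they are images of $\pi_1(T^3)$ and hence already lie in $\pi_1(X-N_T)$ --- but it is exactly the point that makes the collapse of the pushout work, so it deserves to be said precisely.
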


\begin{proof} cf. \cite[Proof Lemma 4]{[BKS]}. We argue in terms of the effect that the attachment of $n$-handles has on the fundamental group. The manifold $T^2\times D^2$ has a handlebody decomposition consisting of one $0$-handle, two $1$-handles, and one $2$-handle. Using the dual decomposition, in order to glue $T^2\times D^2$ back in, one attaches one $2$-handle, two $3$-handles, and one $4$-handle. The fundamental group of the resulting manifold changes as in the statement of the lemma by attaching the $2$-handle. Attaching $3$- and $4$-handles has no effect on the fundamental group of the resulting manifold.

\end{proof}


Provided that certain hypothesis on the manifold that undergoes surgery and on the corresponding torus hold, a geometric structure is readily available for $X_{T, \gamma}(p, q, r)$. If $X$ admits a symplectic form for which the torus $T$ is Lagrangian, then performing a $(p, q, 1)$-torus surgery on the preferred Lagrangian framing of $N_T$ results in $X_{T, \gamma}(p, q, 1)$ being symplectic \cite{[ADK]}; this surgical procedure is known as \emph{Luttinger surgery} \cite{[Lu], [ADK]}. The next section is devoted to describe the existence of twisted generalized complex structures on $X_{T, \gamma}(p, q, 0)$, on its blow ups and blow downs.\\

Given that the coefficients $(p, q, r)$ already encode the surgery curve $\gamma$, from now on it will be dropped from our notation: $X_T(p, q, r):= X_{T, \gamma}(p, q, r)$.

\subsection{Surgical procedures to endow manifolds with a generalized complex structure}  In \cite{[CG1]} and \cite{[CG]}, Cavalcanti and Gualtieri have studied and employed classic topological procedures in order to equip a 4-manifold with a twisted generalized complex structure.\\

Their main results that will be used for our purposes are recalled in the following three theorems.

\begin{theorem}{\label{Theorem 6}} \cite[Theorem 3.1 and Corollary 1]{[CG1]}, \cite[Theorem 4.1]{[CG]} \emph{(p, q, 0)-torus surgery}. Let $(M, \omega)$ be a symplectic 4-manifold, which contains a symplectic torus $T$ of self-intersection zero and of area $A$. Let $\hat{M}:= M_{T}(p, q, 0)$ be the result of performing a $(p, q, 0)$-torus surgery to $M$ along $T$. Then $\hat{M}$ admits a twisted generalized complex structure such that\\
\begin{itemize}

\item The complex locus is given by an elliptic curve $\Sigma$ with modular parameter $\tau = i$, and the induced holomorphic differential $\Omega$ has periods $<A, iA>$.

\item Integrability holds with respect to a 3-form $H$, which is Poincar\'e dual to $A$ times the homology class of an integral circle of $Re(\Omega^{-1})$ in $\Sigma$.\\

\end{itemize}

The $(p, q, 0)$- torus surgery can be performed simultaneously on a collection of disjoint symplectic tori in $M$.
\end{theorem}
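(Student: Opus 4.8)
The strategy, following Cavalcanti--Gualtieri, is to confine the modification to the tubular neighborhood $N_T$ and to glue in an explicit local model that is of symplectic type away from a core torus and of complex type along it. The plan is to first fix a standard symplectic neighborhood of $T$, then write down the local generalized complex structure carried by the surgered solid torus, and finally match the two structures across the boundary three-torus by means of the gluing diffeomorphism $\phi$ attached to the $(p,q,0)$-coefficients. Since nothing happens away from $N_T$, the simultaneous statement for disjoint tori will follow at once.

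First I would invoke the symplectic neighborhood theorem. Because $T$ is symplectic with $[T]^2=0$, its normal bundle is trivial, so $N_T$ is symplectomorphic to $T^2\times D^2$ with $\omega=\omega_{T^2}+\omega_{D^2}$ and $\int_{T^2}\omega_{T^2}=A$. Identifying the core $T^2$ with the square torus $\C/(A\Z+iA\Z)$ equips it with a complex structure whose holomorphic differential $\Omega=dz$ has periods $A$ and $iA$, hence modular parameter $\tau=i$; the scale $A$ is fixed by the symplectic area of $T$. This identification is precisely the source of the two bulleted numerical claims, and the coefficients $(p,q)$ will enter only through the attaching map $\phi$ and the resulting identification of the core with $\Sigma$.

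Next I would introduce the local model on $\C_w\times T^2_z$, with $w$ the normal coordinate, generated by $\rho = w + dw\wedge dz = w\,e^{(dw\wedge dz)/w}$. For $w\neq 0$ this has degree-zero part $w$, so it is of type $0$ with $B+i\omega=(dw\wedge dz)/w$, whereas along $w=0$ one has $\rho=dw\wedge dz$, a decomposable two-form, hence type $2$; the canonical section $s\in C^\infty(K^\ast)$ equals $w$ and vanishes nondegenerately along $\Sigma=\{0\}\times T^2$, realizing the jump from $0$ to $2$. I would verify the non-degeneracy pairing $(\rho,\bar\rho)\neq 0$ off the core directly from the excerpt's criterion. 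The twisting arises because the real part of the logarithmic form $d\log w\wedge dz$ is multivalued and does not extend to a global two-form; its defect is measured by a closed three-form $H$ supported near $\Sigma$, and computing the monodromy of $d\log w\wedge dz$ as $w$ circles $\partial D^2$ identifies $[H]$ as Poincar\'e dual to $A$ times the class of an integral circle of $\mathrm{Re}(\Omega^{-1})$ in $\Sigma$.

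The final and hardest step is the gluing. On $M-N_T$ I would retain a $B$-field transform of $\omega$ and attach the local model via $\phi$, and the crux is to check that the restriction of the model to $\{|w|\geq\epsilon\}$ agrees, after the surgery diffeomorphism, with the product symplectic data near $\partial N_T$. The monodromy of $d\log w\wedge dz$ around $\partial D^2$ is exactly absorbed when the meridian is sent to $pS^1_\alpha+qS^1_\beta$ with vanishing $\mu_T$-component, that is, precisely when $r=0$; this is the mechanism that forces the type change to appear, in contrast to $r\neq 0$ (the Luttinger case $r=1$ yielding a genuine symplectic filling). The main obstacle is therefore this matching: one must show that the near-boundary symplectic and $B$-field data of the type-changing model are carried by $\phi$ onto the ambient structure and that the integrability twist $H$ extends consistently across the collar, so that the glued data satisfies the algebraic, non-degeneracy, and integrability conditions globally. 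Once this is established, performing the construction independently in disjoint neighborhoods yields the simultaneous version.
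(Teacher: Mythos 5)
The paper offers no proof of this statement to compare against: Theorem \ref{Theorem 6} is imported verbatim from Cavalcanti--Gualtieri (\cite[Theorem 3.1 and Corollary 1]{[CG1]}, \cite[Theorem 4.1]{[CG]}) and is used throughout as a black box. Measured against the argument in those references, your sketch reconstructs the correct strategy, and it is essentially their strategy: a Weinstein neighborhood identification $N_T \cong T^2\times D^2$ with product symplectic form, the type-changing local model $\rho = w + dw\wedge dz = w\,e^{(dw\wedge dz)/w}$ on the glued-in piece (type $0$ off the core, type $2$ along it, with anticanonical section $s=w$ vanishing nondegenerately), a $B$-field matching across the boundary $T^3$ that is possible precisely because the meridian coefficient vanishes ($r=0$, i.e.\ the multiplicity-zero logarithmic transform), and locality of the modification giving the simultaneous statement for disjoint tori.

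That said, your proposal is a sketch rather than a proof, and the step you yourself flag as the crux is exactly where all the content of \cite{[CG1]} lives: one must exhibit the model near $|w|=1$ as the $B$-transform of a symplectic form that $\phi$ carries onto the ambient one, and then show that cutting off the non-extendable $B$-field produces a closed $3$-form $H$ with the stated Poincar\'e dual; asserting that this ``must be shown'' does not discharge it. Two smaller imprecisions: first, $\mathrm{Re}(d\log w\wedge dz)$ is single-valued away from $w=0$ (it is its primitive $\mathrm{Re}(\log w\, dz)$ that is multivalued); the twist class arises from the failure of this closed $2$-form to be exact, i.e.\ from its period in the $d\theta\wedge \mathrm{Im}(dz)$ direction, after the cutoff. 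Second, the complex locus $\Sigma$ is the core torus of the glued-in $T^2\times D^2$, whose homology involves the meridian $\mu_T$ of the original torus, and is not the original $T$; so the data $\tau=i$ and periods $\langle A, iA\rangle$ must be traced through the gluing map rather than read off the Weinstein neighborhood of $T$ as you do. Also, nondegeneracy $(\rho,\overline{\rho}) = -\,dw\wedge dz\wedge d\overline{w}\wedge d\overline{z} \neq 0$ holds, and is required, on the core as well as off it. None of these affects the overall architecture, which is faithful to the cited proof.
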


Each time such a surgery is performed, a type change locus is obtained in the twisted generalized complex 4-manifold $M_T(p, q, 0)$. The generalized complex structures produced by Cavalcanti and Gualtieri (\cite{[CG1], [CG]}) on $m\mathbb{CP}^2 \# n \overline{\mathbb{CP}^2}$ are obtained by applying  one $(p, q, 0)$ surgery to an elliptic surface $E(m - 1)$ along a torus fiber.\\

Much like in the symplectic and complex scenarios, the existence of a generalized complex structure on a manifold is preserved under the blow up/blow down operations. The changes on the ambient manifold are exactly the same as in the complex/symplectic case. 

\begin{theorem}{\label{Theorem 7}} \cite[Theorem 3.3]{[CG1]} \emph{Blow ups}. For any non-degenerate complex point\\ $p\in M$ in a twisted generalized complex 4-manifold $M$, there exists a twisted generalized complex 4-manifold $\tilde{M}$ and a generalized complex holomorphic map $\pi: \tilde{M}\rightarrow M$ that is an isomorphism $\tilde{M} - \pi^{-1}(\{p\}) \rightarrow M - \{p\}$. The pair $(\tilde{M}, \pi)$ is called the blow up of $M$ at $p$, and it is unique up to canonical isomorphism.
\end{theorem}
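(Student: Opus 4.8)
The plan is to reduce the statement to a local computation at the nondegenerate complex point $p$ and to import the complex-geometric blow-up through the local normal form. First I would invoke the normal form theorem of Cavalcanti--Gualtieri for nondegenerate complex points: a neighborhood of $p$ is generalized-holomorphically isomorphic to a neighborhood $U$ of the origin in $\C^2$ whose canonical line bundle is generated by
\[ \rho_0 = z_1 + dz_1 \wedge dz_2, \]
so that the type jumps from $0$ to $2$ exactly along the complex line $\{z_1 = 0\}$, where the canonical section $s$ (the degree-zero component of $\rho_0$) has a nondegenerate zero. Because the blow-up map is to be an isomorphism away from $p$, it suffices to construct a generalized complex structure on the blow-up $\widetilde U$ of this local model at the origin and to verify that it agrees with the ambient structure on $U \setminus \{0\}$; the global object $\tilde M$ and the map $\pi$ are then obtained by excising $U$ and regluing $\widetilde U$ along the overlap.

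Next I would perform the ordinary complex blow-up $\beta \colon \widetilde U \to U$ and transport $\rho_0$. In the two standard charts $(z_1, z_2) = (u, uv)$ and $(z_1,z_2) = (st, t)$, with exceptional divisor $E = \beta^{-1}(0)$ cut out by $u = 0$ and $t = 0$ respectively, a direct computation gives
\[ \beta^* \rho_0 = u\,(1 + du \wedge dv) = u\, e^{\,du\wedge dv}, \qquad \beta^* \rho_0 = t\,(s + ds \wedge dt). \]
Thus $\beta^*\rho_0$ vanishes to first order along $E$, exactly as the canonical generator does under a complex blow-up. Dividing by the local defining function of $E$ in each chart produces the nowhere-vanishing candidate generators $e^{du\wedge dv}$ and $s + ds\wedge dt$; invariantly this amounts to declaring the canonical bundle of the blow-up to be $\widetilde K = \beta^* K_0 \otimes \mathcal{O}(E)$, in perfect analogy with $K_{\tilde X} = \beta^* K_X \otimes \mathcal{O}(E)$ for complex surfaces. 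The two descriptions patch, since $u$ and $t$ are the transition-compatible local trivializations of $\mathcal{O}(-E)$ and both arise from the single globally defined form $\beta^*\rho_0$.

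I would then check that the resulting generator $\tilde\rho$ satisfies the three requirements of a canonical generator on each chart, with special attention to the exceptional divisor. On the first chart $\tilde\rho = e^{du\wedge dv}$ is of type $0$; writing $du\wedge dv = B + i\omega$ with $B = \mathrm{Re}(du\wedge dv)$ and $\omega = \mathrm{Im}(du\wedge dv)$ one sees that $\omega$ is symplectic, so the algebraic condition holds and the non-degeneracy $(\tilde\rho, \overline{\tilde\rho}) = (2i\omega)^2 \neq 0$ is satisfied on all of this chart, in particular along $E$. On the second chart $\tilde\rho = s + ds\wedge dt$ is again the local normal form, so non-degeneracy holds there too, with the type jumping to $2$ only along the proper transform $\{s=0\}$ of the old locus, which meets $E$ in the single complex point not covered by the first chart. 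Integrability is inherited from that of $\rho_0$: away from $E$ one divides by the nonvanishing $u$ (resp. $t$), which only shifts the defining section $X + \xi$ by the logarithmic term $d\log u$, and the condition extends across $E$ by continuity; alternatively it is immediate chart-wise, since $d(e^{du\wedge dv}) = 0$. The twisting is handled by taking the pulled-back closed $3$-form $\tilde H$, whose support may be arranged disjoint from $E$.

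Finally, setting $\pi = \beta$ over $U$ and $\pi = \mathrm{id}$ elsewhere yields a generalized holomorphic map $\pi \colon \tilde M \to M$ restricting to an isomorphism $\tilde M \setminus \pi^{-1}(\{p\}) \to M \setminus \{p\}$, and uniqueness up to canonical isomorphism follows from the uniqueness (up to isomorphism) of the local normal form together with the universal property of the complex blow-up. The principal obstacle is precisely the verification along $E$: one must confirm that removing the first-order vanishing of $\beta^*\rho_0$ leaves a form that is non-degenerate at every point of the exceptional sphere and still integrable with respect to a globally defined twisting $3$-form, since it is there that the pulled-back structure could a priori fail to define a genuine generalized complex structure.
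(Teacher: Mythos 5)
This theorem enters the paper as a quoted result, cited from \cite[Theorem 3.3]{[CG1]} with no proof supplied, so there is no internal argument to compare against; your proposal in fact reconstructs essentially the original Cavalcanti--Gualtieri proof. Your steps are the same as theirs and your computations are correct: the normal form $\rho_0 = z_1 + dz_1\wedge dz_2$ near a nondegenerate complex point, the two-chart calculation $\beta^*\rho_0 = u\, e^{du\wedge dv}$ and $\beta^*\rho_0 = t(s + ds\wedge dt)$, the resulting identification $\tilde K = \beta^*K \otimes \mathcal{O}(E)$, and the chart-wise verification of non-degeneracy and integrability along $E$ (with the twisting handled by pulling back $H$ after the $B$-field transform implicit in the normal form) are precisely the content of the cited proof.
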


\begin{theorem} {\label{Theorem 8}}\cite[Theorem 3.4]{[CG1]} \emph{Blow downs}. A twisted generalized complex\\ 4-manifold $\tilde{M}$ containing a 2-brane $\Sigma = S^2$ intersecting the complex locus in a single non-degenerate point may be blown down to a generalized complex 4-manifold $M$. That is, there is a generalized holomorphic map $\pi: \tilde{M} \rightarrow M$ to a twisted generalized complex manifold $M$ that is an isomorphism $\tilde{M} - \Sigma \rightarrow M - \{p = \pi(\Sigma)\}$.
\end{theorem}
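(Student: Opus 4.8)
The plan is to localize the problem. Since the map $\pi$ is required to be an isomorphism on $\tilde{M}-\Sigma$, the twisted generalized complex structure must only be modified on a tubular neighborhood $U$ of the $2$-brane $\Sigma$, and the contraction of $\Sigma$ to the point $p=\pi(\Sigma)$ is supported there. Thus the entire content is to manufacture, on $U$, a new integrable pure spinor that agrees with the old one near $\partial U$ and extends smoothly across $p$. As a first preparatory step I would remove the twisting: on $U$ the closed $3$-form $H$ is exact by the Poincar\'e lemma, $H=dB'$, so a $B'$-field gauge transformation converts $(U,H,\mathcal{J})$ into an honest untwisted generalized complex structure; the flux is reinstalled after the contraction, and because the modification is compactly supported in $U$ the class $[H]$ is unaffected (blowing down changes $b_2$ but not $b_3$, so $H^3(M)\cong H^3(\tilde{M})$ transports the class faithfully).

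Next I would put the structure into standard form near the single nondegenerate intersection point $q=\Sigma\cap(\text{complex locus})$. At a nondegenerate type-change point the canonical bundle of a four-dimensional generalized complex structure admits, after a diffeomorphism and a $B$-field transform, the local generator $\rho=z_1+dz_1\wedge dz_2$ on $\C^2$, whose degree-$0$ part is the section $s$ and vanishes exactly on the type-$2$ locus $\{z_1=0\}$, to first order. The brane hypothesis is what pins down $\Sigma$ inside this picture: the requirement that $\tau_F$ be $\mathcal{J}$-invariant forces $\Sigma$ to be a complex curve in the complex region near $q$, and the assumptions that $\Sigma\cong S^2$ meets the complex locus transversally at a single nondegenerate point identify $\Sigma$ with the exceptional $\mathbb{CP}^1$ (a $(-1)$-curve) of the complex blow up of $\C^2$ at the origin. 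In other words, the plan is to exhibit $U$ as isomorphic, as a generalized complex manifold, to the blow up of the standard nondegenerate model produced by Theorem \ref{Theorem 7}.

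With that identification in hand I would simply invert the blow up. The holomorphic contraction $\pi\colon\widetilde{\C^2}\to\C^2$ of the exceptional divisor is an isomorphism off $\Sigma$, so pushing $\rho$ forward by $\pi$ reproduces on $\C^2-\{0\}$ a structure isomorphic to the original one, and the issue is to check that the push-forward extends smoothly and integrably across $0$. Because the blow up of Theorem \ref{Theorem 7} was constructed precisely so that its restriction is the (transform of the) pullback of $\rho=z_1+dz_1\wedge dz_2$, the contraction returns exactly this generator, which satisfies the algebraic, non-degeneracy and integrability conditions at $0$ with $0$ again a nondegenerate complex point. Gluing the contracted neighborhood to $\tilde{M}-\Sigma$ along the collar, where $\pi$ is the identity, yields $M$, and reinstalling $B'$ produces $(M,H,\mathcal{J})$ together with the generalized holomorphic map $\pi$, which by construction restricts to an isomorphism $\tilde{M}-\Sigma\to M-\{p\}$.

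The main obstacle is the smoothness and integrability of the contracted pure spinor across $p$: \emph{a priori} the push-forward $\pi_*\rho$ is defined only on the punctured neighborhood, and one must verify that it has no singularity at $p$ and still solves $d\rho+H\wedge\rho=(X+\xi)\cdot\rho$ there. This is exactly where the precise local normal form does the work, since it is what guarantees that $\Sigma$ is genuinely the exceptional curve of the model blow up rather than some unrelated $(-1)$-sphere, and hence that the contraction is literally the inverse of Theorem \ref{Theorem 7} and inherits its regularity. A secondary point to check carefully is that the brane condition $i^{\ast}\xi=i_XF$ with $dF=i^{\ast}H$ is precisely what renders $\Sigma$ holomorphic in the complex region, so that the identification with the exceptional divisor, and with it the whole reduction to the complex blow-down, is legitimate.
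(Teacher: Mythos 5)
First, a point of context: this paper contains no proof of the statement at all. Theorem \ref{Theorem 8} is quoted from Cavalcanti--Gualtieri \cite{[CG1]} (their Theorem 3.4) as one of the tools assembled in Section \ref{Section 3}, so the only meaningful comparison is with the proof in \cite{[CG1]}. Your outline does follow the same broad strategy as that proof: localize to a tubular neighborhood of $\Sigma$, invoke the local normal form $\rho = z_1 + dz_1\wedge dz_2$ at the nondegenerate point, and invert the holomorphic blow-up of Theorem \ref{Theorem 7}.

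However, there is a genuine gap at the step you dispose of in one sentence, namely the claim that the hypotheses ``identify $\Sigma$ with the exceptional $\mathbb{CP}^1$'' of the model blow-up. The normal form at a nondegenerate point controls the structure only in a small ball around the single point $q = \Sigma \cap (\text{type-change locus})$. Every other point of $\Sigma$ lies in the type-$0$ (symplectic) locus, where the brane condition makes $\Sigma$ a Lagrangian-type brane (twisted by $F$), not a complex curve; so the generic point of $\Sigma$ is invisible to the holomorphic model, and no amount of local holomorphic geometry near $q$ can pin down the structure along the rest of the sphere. What is actually required is a semi-global neighborhood theorem for such mixed branes --- complex near $q$, Lagrangian elsewhere --- that is, a generalized-complex analogue of the Weinstein Lagrangian neighborhood theorem, glued compatibly to the holomorphic normal form near $q$ by a Moser-type argument, together with the verification that such a sphere automatically has self-intersection $-1$ (you assume this rather than derive it: the Lagrangian behaviour alone would force $-\chi(S^2)=-2$, and it is the complex point that corrects this by $+1$, matching the exceptional curve of the model, where $E$ is Lagrangian away from its single nondegenerate intersection with the type-change locus). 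This neighborhood identification is the real content of the proof in \cite{[CG1]}; once it is in place, the smoothness and integrability of the pushed-forward spinor across $p$ --- the issue you single out as the main obstacle --- is automatic, since the contraction then literally coincides with the holomorphic blow-down of the model. In short, your proposal has the correct skeleton but leaves unproved exactly the step that carries the mathematical weight.
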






\subsection{On the Seiberg-Witten invariants of the manifolds manufactured}  A purpose of this paper is to enlarge the class of twisted generalized complex 4-manifolds that are neither symplectic nor complex. We wish to make sure that the manifolds constructed have trivial Seiberg-Witten invariant \cite{[SW]}. From the work of Taubes \cite{[Ta1]}, such manifolds will not admit a symplectic structure.\\

The basic result for such a purpose is the adjunction inequality.

\begin{theorem} {\label{Theorem 9}} Adjunction inequality \cite{[KM], [MMS]}. Let $X$ be a smooth closed oriented 4-manifold with $b_2^+(X) > 1$, and let $\Sigma_g \hookrightarrow X$ be a smoothly embedded genus $g$ surface with non-negative self-intersection. If $g\geq 1$, then every basic class $K$ of $X$ satisfies
\begin{center}
$2g - 2 \geq |<K, [\Sigma_g]>| + [\Sigma_g] \cdot [\Sigma_g]$.
\end{center}

Furthermore, if $g = 0$ and $[\Sigma_g] \in H_2(X; \Z)$ is not a torsion class, then the Seiberg-Witten invariant of $X$ vanishes.
\end{theorem}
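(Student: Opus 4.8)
The plan is to run the proof inside Seiberg--Witten theory, where a \emph{basic class} is by definition a class $K = c_1(\mathfrak{s})$ for a $\mathrm{Spin}^c$ structure $\mathfrak{s}$ with nonzero Seiberg--Witten invariant $SW_X(\mathfrak{s})$. Because $b_2^+(X) > 1$, this invariant is independent of the Riemannian metric and of a generic small perturbation, so the logical backbone is: nonvanishing of $SW_X(\mathfrak{s})$ forces the moduli space of solutions $(A,\Phi)$ to be nonempty \emph{for any metric we choose}; we then select a metric adapted to $\Sigma_g$ and read off the inequality from the a priori estimates satisfied by such a solution. Since the conjugate structure $\bar{\mathfrak{s}}$ has $c_1 = -K$ and $SW_X(\bar{\mathfrak{s}}) = \pm SW_X(\mathfrak{s})$, bounding $\langle K,[\Sigma_g]\rangle$ from above automatically yields the bound on $|\langle K,[\Sigma_g]\rangle|$.

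First I would record the analytic input from the Weitzenb\"ock formula for the Dirac operator: every solution obeys the pointwise bound $\sup_X|\Phi|^2 \le \sup_X(-s)$, where $s$ is the scalar curvature, together with the curvature identity $F_A^+ = \sigma(\Phi)$ and the resulting estimate $|F_A^+| \le C|\Phi|^2$ for a universal constant $C$. The pairing is computed from the determinant connection via $\langle K,[\Sigma_g]\rangle = \frac{i}{2\pi}\int_X F_A\wedge\eta$, where $\eta$ is a closed bump (Thom) form supported near $\Sigma_g$ and Poincar\'e dual to $[\Sigma_g]$. The decomposition $F_A = F_A^+ + F_A^-$ together with the orthogonality $\int_X \alpha^+\wedge\beta^- = 0$ reduces the pairing to $\frac{i}{2\pi}\int_X (F_A^+\wedge\eta^+ + F_A^-\wedge\eta^-)$, which is why the hypothesis $[\Sigma_g]\cdot[\Sigma_g]\ge 0$ matters: a class of non-negative square lies in the closure of the positive cone, so for a suitable metric $\eta$ can be taken (asymptotically) self-dual, $\eta^-\to 0$, and only the \emph{controlled} self-dual curvature $F_A^+$ survives in the pairing.

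For the base case $g\ge 1$, $[\Sigma_g]\cdot[\Sigma_g] = 0$, I would equip a tubular neighborhood $\Sigma_g\times D^2$ with a product metric built from a constant-curvature metric on $\Sigma_g$ (hyperbolic for $g\ge 2$, flat for $g=1$) and rescale the base to have large area. By Gauss--Bonnet the scale-invariant quantity $\int_{\Sigma_g}(-s)\,dA$ is governed by $-4\pi\chi(\Sigma_g) = 8\pi(g-1)$, so combining $|F_A^+|\le C|\Phi|^2 \le C\sup(-s)$ with the self-duality of $\eta$ bounds $\langle K,[\Sigma_g]\rangle$ in the limit by $2g-2$. To pass from square zero to arbitrary non-negative self-intersection I would induct using the blow-up formula: blowing up a point of $\Sigma_g$ produces, in $X\#\overline{\mathbb{CP}^2}$, a genus-$g$ surface of self-intersection $[\Sigma_g]\cdot[\Sigma_g]-1$ in the class $[\Sigma_g]-E$, whose basic classes are $K\pm E$; since $\langle K\pm E,[\Sigma_g]-E\rangle = \langle K,[\Sigma_g]\rangle\pm 1$ and $b_2^+$ is unchanged, the inductive hypothesis reproduces exactly the summand $[\Sigma_g]\cdot[\Sigma_g]$.

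I expect two genuine difficulties. The analytic heart is turning the soft estimate $|F_A^+|\le C\sup(-s)$ into the \emph{sharp} constant $2g-2$; a naive Cauchy--Schwarz loses the linear dependence on the genus, so one must exploit the structure of the adapted metric and the unperturbed equation carefully, as in Kronheimer--Mrowka. The second is the $g=0$ statement, which must be handled separately because Gauss--Bonnet now gives $\int_{\Sigma_0}s > 0$: instead of a negative-curvature estimate one exploits positivity. Here I would stretch a long neck $\Sigma_0\times S^1\times[-T,T]$ carrying a positive-scalar-curvature metric, which forces $\Phi\equiv 0$ along the neck and pushes every solution toward a reducible configuration; the hypothesis that $[\Sigma_0]$ is not torsion guarantees that this neck genuinely separates homology, so the neck-stretching limit detects a nontrivial splitting and shows that no $\mathrm{Spin}^c$ structure can support solutions for the stretched metrics. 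Hence $X$ has no basic classes and $SW_X\equiv 0$. Making this limit rigorous --- controlling bubbling and the convergence to the cylindrical-end pieces --- is the main obstacle in this half of the statement.
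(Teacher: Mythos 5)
The first thing to say is that the paper itself contains no proof of this statement: Theorem \ref{Theorem 9} is imported as a black box from \cite{[KM]} and \cite{[MMS]} and is only ever \emph{used} (to conclude that the constructed manifolds have vanishing Seiberg--Witten invariants, hence are non-symplectic by Taubes). So your proposal has to be measured against the arguments in the cited literature. Your skeleton --- basic classes as $c_1$ of $\mathrm{Spin}^c$ structures with nonzero invariant, metric independence from $b_2^+>1$, conjugation symmetry to get the absolute value, Weitzenb\"ock estimates on a neck, and the blow-up induction to reduce positive square to square zero --- is the standard and correct one, and your blow-up step (basic classes $K\pm E$, pairing with $[\Sigma_g]-E$) is accurate modulo citing the blow-up formula. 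However, two of your central mechanisms are genuinely wrong, not merely unfinished.

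First, for $g\geq 1$ and square zero, you propose to represent $PD[\Sigma_g]$ by an (asymptotically) self-dual form so that ``only the controlled self-dual curvature $F_A^+$ survives in the pairing.'' This cannot be made to work, and it would prove too much. In the sharp Kronheimer--Mrowka estimate on a long neck $[0,T]\times S^1\times\Sigma_g$ (hyperbolic $\Sigma_g$ of area $A$, so $\sup(-s)=8\pi(g-1)/A$), the pointwise bound $|F_A^+|\leq\tfrac{1}{2\sqrt2}|\Phi|^2\leq\tfrac{1}{2\sqrt2}\sup(-s)$ accounts for only \emph{half} of the answer: averaging over the neck, the self-dual part contributes at most $g-1$, and the anti-self-dual part contributes the other $g-1$. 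The ASD contribution does not tend to zero; it is controlled by the Chern--Weil identity $\|F_A^-\|_{L^2}^2=\|F_A^+\|_{L^2}^2+4\pi^2K^2$, whose right-hand side grows linearly in $T$, and Cauchy--Schwarz over the neck then yields an additive $g-1$ in the limit rather than $o(1)$. If your mechanism were correct one would obtain $|\langle K,[\Sigma_g]\rangle|\leq g-1$, which is false: a square-zero symplectic genus-$2$ curve in a minimal K\"ahler surface of general type has $\langle K,[\Sigma]\rangle=2g-2=2$. So the step ``only $F^+$ survives'' must be replaced by the two-term estimate; this is the actual content of \cite{[KM]}, not constant-chasing.

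Second, the $g=0$ half has a deeper gap. Stretching a positive-scalar-curvature neck $S^2\times S^1$ does give emptiness of the moduli space --- hence vanishing --- but only for $\mathrm{Spin}^c$ structures with $\langle c_1(\mathfrak{s}),[\Sigma_0]\rangle\neq 0$, since for those no flat asymptotic limit on the neck exists. The theorem asserts $SW_X\equiv 0$ for \emph{all} structures, and for those with $\langle c_1(\mathfrak{s}),[\Sigma_0]\rangle=0$ the stretched-metric moduli space need not be empty: broken configurations that are reducible and flat on the $S^2\times D^2$ side exist and can be glued. (Compare the connected-sum theorem, where vanishing requires $b_2^+>0$ on \emph{both} sides; here the $S^2\times D^2$ side has $b_2^+=0$, so ``PSC neck implies no solutions'' is simply unavailable.) Showing that the \emph{count} vanishes for such structures is exactly where the hard input lies: either a gluing/product formula along $S^2\times S^1$ over the circle of flat connections (the \cite{[MMS]}-type technology), or a topological reduction --- e.g.\ since $[\Sigma_0]$ is non-torsion, choose an embedded surface $C$ with $d=[C]\cdot[\Sigma_0]>0$, resolve $C$ together with $k$ parallel copies of the sphere to get surfaces of genus $g(C)+k(d-1)$ and square $[C]^2+2kd$, and apply the already-established $g\geq 1$ inequality for $k\gg 0$ to contradict the existence of any basic class. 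Relatedly, your reading of the non-torsion hypothesis (``the neck genuinely separates homology'') is off: the neck need not separate anything; non-torsionness is what supplies the dual class $C$ above, and it is precisely what fails for a small null-homologous sphere, for which the conclusion is false.
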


\section{Generalized complex structures on spin manifolds}\label{Section 4}

In this section, we occupy ourselves with equipping the almost-complex connected sums of copies of $S^2\times S^2$ with generalized complex structures. The generalized complex structures built on $S^2\times S^2$ are new, and we observe some unexpected phenomena concerning the number of type changing loci. Our constructions owe plenty to the efforts of Akhmedov, Baldridge, Fintushel, Kirk, D. Park, and Stern in \cite{[BK3], [FPS], [AP]} to construct exotic simply connected 4-manifolds with small Euler characteristics.

\subsection{More generalized complex structures on $S^2\times S^2$} \label{Section 4.1}
Perturbing the K\"ahler structure on $\mathbb{CP}^1\times \mathbb{CP}^1$ yields a generalized complex structure on $S^2\times S^2$, which has a single type change locus. In this section we to construct different generalized complex structures on $S^2\times S^2$, in the sense that they have more than one type change loci. We are indebted to Gil Cavalcanti and to Marco Gualtieri for pointing this out.\\

Consider the product of two copies of a genus 2 surface endowed with the product symplectic structure  $\Sigma_2 \times \Sigma_2$. Denote the loops generating the fundamental group  $a_i \times \{s_2\}, b_i \times \{s_2\}, \{s_1\} \times c_i, \{s_1\} \times d_i$  by $a_1, b_1, a_2, b_2$ and  $c_1, d_1, c_2, d_2$, so that $\pi_1(\Sigma_2) = <a_1, b_1, a_2, b_2 | [a_1, b_1] [a_2, b_2] =1>$ for the first factor, and analogously 
$\pi_1(\Sigma_2) = <c_1, d_1, c_2, d_2 | [c_1, d_1] [c_2, d_2] =1>$ for the second factor. The topological properties that will be used are summarized in the following proposition.\\

\begin{proposition} {\label{Proposition 10}}\emph{(cf. \cite[Section 4]{[FPS]})}. The first homology group is given by\\ $H_1(\Sigma_2\times \Sigma_2; \mathbb{Z}) \cong \mathbb{Z}^8$, and it is generated by the loops $a_1, b_1, a_2, b_2, c_1, d_1, c_2$, and $d_2$. The second homology group $H_2(\Sigma_2\times \Sigma_2; \mathbb{Z}) = \mathbb{Z}^{18}$ is generated by the sixteen tori $a_i \times c_j, a_i \times d_j, b_i\times c_j, b_i\times d_j$ $ (i = 1, 2, j = 1, 2)$, and the surfaces $\Sigma_2 \times \{ pt\}$, $\{pt\} \times \Sigma_2$. The tori $a_i\times c_j, b_i\times c_j$ and the surface $\Sigma_2 \times \{pt\}$ are geometrically dual to the tori $b_i\times d_j, a_i\times d_j$ and the surface $\{pt\} \times \Sigma_2$ respectively, in the sense that $a_i\times c_j$ and $b_i\times d_j$  intersect transversally at one point (and similarly for the other surfaces), and every other intersection is pairwise empty. The intersection form over the integers is even, and it is generated by nine hyperbolic summands.

The Euler characteristic and signature of the product manifold are given by $e(\Sigma_2\times \Sigma_2) = 4$, and $\sigma(\Sigma_2\times \Sigma_2) = 0$.\\
\end{proposition}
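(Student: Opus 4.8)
The plan is to reduce every assertion to the Künneth theorem together with the standard intersection theory on a product of surfaces; no genuinely four-dimensional input is needed beyond Poincar\'e duality. First I would record the homology of a single genus-2 surface: $H_0(\Sigma_2) \cong \Z$, $H_1(\Sigma_2) \cong \Z^4$ with basis the symplectic system $\{a_1, b_1, a_2, b_2\}$ (so $a_i \cdot b_j = \delta_{ij}$ and $a_i \cdot a_j = b_i \cdot b_j = 0$), and $H_2(\Sigma_2) \cong \Z$ generated by $[\Sigma_2]$. Since these groups are free, the Künneth formula carries no $\Tor$ terms and yields $H_n(\Sigma_2 \times \Sigma_2) \cong \bigoplus_{i+j=n} H_i(\Sigma_2) \otimes H_j(\Sigma_2)$. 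In degree one this is $(H_0 \otimes H_1) \oplus (H_1 \otimes H_0) \cong \Z^4 \oplus \Z^4 = \Z^8$ with basis $a_1, b_1, a_2, b_2, c_1, d_1, c_2, d_2$; in degree two it is $(H_2 \otimes H_0) \oplus (H_1 \otimes H_1) \oplus (H_0 \otimes H_2) \cong \Z \oplus \Z^{16} \oplus \Z = \Z^{18}$, the outer summands spanned by $\Sigma_2 \times \{pt\}$ and $\{pt\} \times \Sigma_2$ and the middle summand $H_1 \otimes H_1$ having the sixteen product tori $a_i \times c_j$, $a_i \times d_j$, $b_i \times c_j$, $b_i \times d_j$ as a basis. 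This at once pins down the ranks and exhibits the eighteen listed classes as a basis.

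Next I would compute the intersection pairing, which is cleanest to read off geometrically: two product classes $\alpha \times \gamma$ and $\beta \times \delta$ meet exactly over the intersection of $\alpha,\beta$ in the first copy of $\Sigma_2$ times that of $\gamma,\delta$ in the second, so the four-dimensional intersection number is the product of the two surface intersection numbers, up to the Koszul sign from the cross product. Each product torus has trivial normal bundle and hence self-intersection zero, as do $\Sigma_2 \times \{pt\}$ and $\{pt\} \times \Sigma_2$; thus every diagonal entry vanishes and the form is even. The nonzero pairings are precisely $a_i \times c_j$ with $b_i \times d_j$ and $b_i \times c_j$ with $a_i \times d_j$ (each $\pm 1$, since $a_i \cdot b_i = c_j \cdot d_j = 1$ in $\Sigma_2$), and $\Sigma_2 \times \{pt\}$ with $\{pt\} \times \Sigma_2$ (transverse at one point); all remaining pairings vanish because the relevant curves are disjoint in one factor. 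These are exactly the transversality-at-one-point statements claimed, and they split $H_2$ into eight hyperbolic pairs from the tori plus one from the two surfaces, i.e. nine copies of the hyperbolic form.

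Finally, $e(\Sigma_2 \times \Sigma_2) = e(\Sigma_2)^2 = (-2)^2 = 4$ by multiplicativity of the Euler characteristic (also visible as $1 - 8 + 18 - 8 + 1$), and the signature is read off directly from the form $9\left(\begin{smallmatrix} 0 & 1 \\ 1 & 0 \end{smallmatrix}\right)$ as $\sigma = 0$. I expect the only delicate point to be the bookkeeping of the Künneth/Koszul signs and the verification that the listed intersections are genuinely transverse and counted with correct multiplicity; once the pairing matrix is identified as nine hyperbolic summands, unimodularity is automatic from Poincar\'e duality and no appeal to the classification of indefinite even forms is required.
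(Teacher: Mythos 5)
Your proposal is correct and complete. Note that the paper itself offers no proof of this proposition: it is stated as a summary of known topological facts with a citation to Section 4 of Fintushel--Park--Stern, and the analogous Proposition \ref{Proposition 11} explicitly leaves the homological claims "to the reader." So your K\"unneth-plus-intersection-theory computation is precisely the standard argument the paper takes for granted, and it fills that gap soundly: the torsion-free K\"unneth decomposition pins down $H_1 \cong \Z^8$ and $H_2 \cong \Z^{18}$ with the stated bases, the product formula $(\alpha\times\gamma)\cdot(\beta\times\delta) = \pm(\alpha\cdot\beta)(\gamma\cdot\delta)$ together with disjointness of distinct standard curves in each factor gives exactly the claimed geometric duality, and the resulting matrix of nine blocks $\left(\begin{smallmatrix} 0 & \pm 1 \\ \pm 1 & 0 \end{smallmatrix}\right)$ (each reducible to the hyperbolic form by flipping the sign of one basis vector) yields evenness, $\sigma = 0$, and unimodularity in one stroke. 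The only caveats worth stating are the ones you already flagged — orientation/Koszul signs, which are immaterial here since they only affect hyperbolic blocks up to the harmless sign change — and the fact that triviality of the normal bundles of the product tori and of $\Sigma_2\times\{pt\}$, $\{pt\}\times\Sigma_2$ is what makes every diagonal entry vanish on the nose rather than merely even.
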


Notice that the element in the first homology group and the loop generating it are being denoted by the same symbol. More homotopical information on these loops will be needed in our constructions, and we will proceed as follows. Removing a surface from a 4-manifold may change the fundamental group of the ambient manifold, since non nullhomotopic loops might be introduced by the meridian of the surface, thus adding a generator to the presentation of the group. \\

According to Fintushel, D. Park and Stern \cite[Section 4]{[FPS]} inside the complement in $\Sigma_2\times \Sigma_2$ of eight of the Lagrangian tori described above, one can choose basepaths from a basepoint $(x, y)$ of $\Sigma_2\times \Sigma_2$ to a basepoint of the boundaries of the tubular neighborhoods of the tori so that the two Lagrangian push offs, and the meridians of the tori are given by\\

\begin{center}
$\{\tilde{a}_1, \tilde{c}_1; \mu_1 = [\tilde{b}_1^{-1}, \tilde{d}_1^{-1}]\}, \{\tilde{a}_1, \tilde{c}_2; \mu_2 = [\tilde{b}_1^{-1}, \tilde{d}_2^{-1}]\}, \{\tilde{a}_2, \tilde{c}_1; \mu_3 = [\tilde{b}_2^{-1}, \tilde{d}_1^{-1}]\},$
$ \{\tilde{a}_2, \tilde{c}_2; \mu_4 = [\tilde{b}_2^{-1}, \tilde{d}_2^{-1}]\}, \{\tilde{b}_1, \tilde{d}_1 \tilde{c}_1 \tilde{d}_1^{-1}; \mu_5 = [\tilde{a}_1^{-1}, \tilde{d}_1]\}, \{\tilde{b}_2, \tilde{d}_2 \tilde{c}_2 \tilde{d}_2^{-1}; \mu_6 = [\tilde{a}_2^{-1}, \tilde{d}_2]\},$
$\{\tilde{b}_2 \tilde{a}_2 \tilde{b}_2^{-1}, \tilde{d}_1; \mu_7 = [\tilde{b}_2, \tilde{c}_1^{-1}]\},  \{\tilde{b}_1 \tilde{a}_1 \tilde{b}_1^{-1}, \tilde{d}_2; \mu_8 = [\tilde{b}_1, \tilde{c}_2^{-1}]\}$.
\end{center}

The elements with tildes denote loops that are homotopic to the corresponding loop that generates an element in homology. In \cite[Section 4]{[FPS]}, homotopies among the loops are found in order to obtain a presentation for the fundamental group of complement of the tori inside $\Sigma_2 \times \Sigma_2$ that is still generated by the same number of generators that $\pi_1(\Sigma_2 \times \Sigma_2)$ has.

\begin{remark} {\label{Remark 2}} Abuse of notation. \emph{From now on we will make no distinction between a loop, its homotopy class, and the corresponding generator in homology. In particular the decorations above will be abandoned: a loop $\tilde{a}$ will now just be denoted by $a$. These oversimplifications are justified by following and building upon recent papers \cite{[BKS], [FPS], [BK3], [AP]}, where torus surgeries are used to unveil exotic smooth structures on almost-complex 4-manifolds. In particular, we are helped greatly by the analysis done by Baldridge and Kirk in \cite{[BKS], [BK3]}}.
\end{remark}

Proposition \ref{Proposition 10} says that the torus, its meridian and its Lagrangian pushoffs available for surgery are given by
\begin{center}
$T_1:= a_1\times c_1,  m_1 = a_1, l_1 = c_1, \mu_1 = [b_1^{-1}, d_1^{-1}]$,\\
$T_2 : = a_1\times c_2, m_2 = a_1, l_2 = c_2, \mu_2 = [b_1^{-1}, d_2^{-1}]$,\\
$T_3 := a_2\times c_1,  m_3 = a_2, l_3 = c_1, \mu_3 = [b_2^{-1}, d_1^{-1}]$,\\
$T_4 := a_2\times c_2, m_4 = a_2, l_4 = c_2, \mu_4 = [b_2^{-1}, d_2^{-1}]$,\\
$T_5:= b_1\times c_1, m_5 = b_1, l_5 = d_1 c_1 d_1^{-1}, \mu_5 = [a_1^{-1}, d_1]$,\\
$T_6 := b_2\times c_2, m_6 = b_2, l_6 = d_2 c_2 d_2^{-1}\mu_6 = [a_2^{-1}, d_2]$,\\
$T_7 := a_2\times d_1, m_7 = b_2 a_2 b_2^{-1}, l_7 = d_1, \mu_7 = [b_2, c_1^{-1}]$, and\\
$T_8 := a_1\times d_2, m_8 = b_1 a_1 b_1^{-1}, l_8 = d_2, \mu_8 = [b_1, c_2^{-1}]$.\\
\end{center}

The reader is kindly reminded that in our notation, for example, a $(p, 0, r)$-torus surgery on $T_1$ stands for  a torus surgery on $T_1$ along the curve $m_1^p = a_1^p$; if $r = 1$, this torus surgery is a Luttinger surgery \cite{[ADK]}, and the case $r = 0$ correspond to the surgery described in Theorem \ref{Theorem 6}.. 
We construct generalized complex structures on $S^2\times S^2$ with a prescribed number of type change loci (up to eight of them) as follows. We exemplify two cases.\\

Say we would like a generalized complex structure with a number of eight type change loci. Use Gompf's result \cite[Lemma 1.6]{[Go]} to perturb the symplectic form on $\Sigma_2\times \Sigma_2$, so that the eight homologically essential Lagrangian tori $\{T_1, \ldots, T_8\}$ become symplectic. Perform simultaneously $(1, 0, 0)$-surgeries on the tori $T_2, T_3, T_5, T_6$, and $(0, 1, 0)$-surgeries on the tori $T_1, T_4, T_7, T_8$. Let $X(8)$ be the manifold obtained after the surgeries, which is diffeomorphic to $S^2\times S^2$. By Theorem \ref{Theorem 6}, $X(8)$ admits a generalized complex structure that contains eight type change loci.\\

Let us now produce a generalized complex structure on $S^2\times S^2$ that contains four type change loci. Our starting symplectic manifold is again $\Sigma_2\times \Sigma_2$. Perturb the symplectic form so that the homologically essential Lagrangian tori $T_5, T_6, T_7$ and $T_8$ become symplectic tori \cite[Lemma 1.6]{[Go]}. Perform four Luttinger surgeries: $(0, 1, +1)$-surgery on $T_1$, $(1, 0, -1)$-surgery on $T_2$, $(1, 0,  +1)$-surgery on $T_3$, $(0, 1, -1)$-surgery on $T_4$, and $(1, 0, + 1)$-surgeries on $T_5$ and on $T_6$. Now perform four $(p, q, 0)$-torus surgeries on the remaining symplectic tori as follows. Simultaneously perform $(1, 0, 0)$-surgery on $T_5$ and on $T_6$, and $(0, 1, 0)$-surgery on $T_7$ and on $T_8$. Denote the resulting manifold by $X(4)$.\\

By Theorem \ref{Theorem 6}, $X(4)$ admits a generalized complex structure. The group $\pi_1(X)$ is generated by the elements $a_1, b_1, a_2, b_2, c_1, d_1, c_2, d_2$, and the following (among others) relations hold has the following presentation:

\begin{center}
$c_1 = [d_1^{-1}, b_1^{-1}] ,  a_1 = [b_1^{-1}, d_2^{-1}] , a_2 = [d_1^{-1}, b_2^{-1}],  c_2 = [b_2^{-1}, d_2^{-1}]$,
\end{center}
\begin{center}
$b_1 = [d_1, a_1^{-1}] = 1, b_2 = [d_2, a_2^{-1}] = 1, d_1 = [c_1^{-1}, b_2] = 1, d_2 = [c_2^{-1}, b_1] = 1$.\\
\end{center}

It is straightforward to see that $\pi_1(X(4)) = \{1\}$. Torus surgeries preserve both the Euler characteristic and the signature. Thus, $e(X(4)) = 4$ and $\sigma(X(4)) = 0$. Moreover, the generalized complex manifold $X$ is spin. Indeed, the homological effect of such a $(p, 0, 0)$ or $(0, q, 0)$-torus surgery is to kill both the homology class of the torus on which the surgery is performed, and the class of its dual torus. Therefore, the intersection form over the integers changes by a hyperbolic summand, and it remains to be even. Freedman's Theorem \cite{[F]} implies that the resulting manifold $X(4)$ is homeomorphic to $S^2\times S^2$. The four type change loci arise as the core tori of the last four torus surgeries.\\

Inside the symplectic 4-manifold obtained by applying the four Luttinger surgeries are the symplectically imbedded surfaces of genus two $\Sigma_2\times \{x\}$, and  $\{x\} \times \Sigma_2$. The $(p, 0, 0)$- and $(0, q, 0)$-torus surgeries reduce the genus of these surfaces, which after the surgeries become imbedded 2-spheres. In particular, $X(4)$ is diffeomorphic to $S^2\times S^2$.\\

Variations on the surgical procedure described above yield the following theorem

\begin{theorem}{\label{Theorem MT}} Let $n \in \{1, 2, 3, 4, 5, 6, 7, 8\}$. The 4-manifold $S^2\times S^2$ admits a generalized complex structure $(X(n), H, \mathcal{J})$, which has $n$ type change loci.
\end{theorem}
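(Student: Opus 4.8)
The plan is to realize each value $n \in \{1, \ldots, 8\}$ by a suitable combination of Luttinger surgeries (coefficient $r = \pm 1$) and type-changing $(p,q,0)$-surgeries, all performed on the eight Lagrangian tori $T_1, \ldots, T_8$ inside $\Sigma_2 \times \Sigma_2$. The two worked cases $n = 8$ and $n = 4$ already exhibited in the excerpt serve as the template: the number of type change loci is exactly the number of $(p,q,0)$-surgeries performed (by Theorem \ref{Theorem 6}, each such surgery contributes one type change locus), while the Luttinger surgeries ($r = \pm 1$) are used purely as auxiliary moves to collapse the fundamental group and reduce the genus of $\Sigma_2 \times \{x\}$ and $\{x\}\times \Sigma_2$ to spheres. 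So for each $n$, I would select $n$ of the tori to undergo $(1,0,0)$- or $(0,1,0)$-surgery, and use Luttinger surgeries on the remaining tori (and possibly repeat the chosen ones with $r = \pm 1$ beforehand, as in the $n=4$ case) to guarantee simple connectivity and the correct diffeomorphism type.

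\emph{First} I would fix, for each $n$, an explicit surgery recipe and verify three things in turn. \emph{(i) The fundamental group is trivial.} Using Lemma \ref{Lemma BK}, each surgery imposes the relation $\mu_T^r m_T^p l_T^q = 1$; substituting the explicit meridians and push-offs from Proposition \ref{Proposition 10}'s list (the commutator expressions for $\mu_1, \ldots, \mu_8$) yields a system of relations which, as in the $n=4$ computation, forces each generator $a_i, b_i, c_i, d_i$ to be trivial. \emph{(ii) The manifold is homeomorphic to $S^2 \times S^2$.} The Euler characteristic and signature are surgery-invariant, so $e = 4$, $\sigma = 0$; each $(p,q,0)$-surgery removes a hyperbolic summand from the intersection form, keeping it even, so by Freedman's theorem \cite{[F]} the result is homeomorphic to $S^2 \times S^2$. \emph{(iii) The diffeomorphism type is $S^2 \times S^2$.} Here I would argue that the genus-two surfaces $\Sigma_2 \times \{x\}$ and $\{x\} \times \Sigma_2$ have their genus reduced by the $(p,q,0)$- and Luttinger surgeries until they become embedded spheres, forcing the standard smooth structure.

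\emph{Then} the generalized complex structure itself is supplied directly by Theorem \ref{Theorem 6}: since the tori chosen for $(p,q,0)$-surgery can be made symplectic by Gompf's perturbation \cite[Lemma 1.6]{[Go]}, and the surgery can be performed simultaneously on a disjoint collection, the output carries a twisted generalized complex structure with precisely one type change locus per $(p,q,0)$-surgery, hence exactly $n$ of them. Because the resulting $X(n)$ is simply connected, Remark \ref{Remark 1} gives $H = 0$, so these are honestly (untwisted) generalized complex structures.

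\emph{The hard part} will be arranging, for \emph{every} $n$ simultaneously, a single coherent choice of which tori receive which surgery so that the relations from Lemma \ref{Lemma BK} still kill $\pi_1$ \emph{and} the genus-reduction argument still terminates at spheres. The eight tori and their meridians are highly interdependent (the push-off of $T_5$ is $d_1 c_1 d_1^{-1}$, the meridian of $T_5$ is $[a_1^{-1}, d_1]$, etc.), so turning off a type-changing surgery on a given torus removes exactly the relation that was previously trivializing one generator; I must then reinstate that relation via an auxiliary Luttinger surgery (with $r = \pm 1$) on the same or a partner torus without introducing a new type change locus. Verifying that such a reshuffling exists for each of the eight values of $n$, and that it never obstructs the genus reduction of the two section surfaces, is the genuine content of the theorem; the geometric structure then comes for free from Theorem \ref{Theorem 6}.
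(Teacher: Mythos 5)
Your proposal is correct and is essentially the paper's own argument: the paper works out the cases $n=8$ and $n=4$ explicitly and then obtains the remaining values by precisely the reshuffling you describe, namely performing $(p,q,0)$-surgeries on $n$ of the tori $T_1,\ldots,T_8$ and Luttinger surgeries on the remaining $8-n$ so that the fundamental group still dies and the two factor surfaces still become spheres. Your verification scheme (Lemma \ref{Lemma BK} for $\pi_1$, invariance of $e$ and $\sigma$ plus Freedman for the homeomorphism type, genus reduction for the diffeomorphism type, and Theorem \ref{Theorem 6} with Remark \ref{Remark 1} giving the untwisted structure with exactly $n$ type change loci) is the same one the paper uses, stated at the same level of detail.
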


\subsection{The almost-complex connected sums $(2g - 3)(S^2\times S^2), g\geq 3$}{\label{Section 4.2}} We generalize the procedure described in the previous section with the purpose of endowing the connect sums of $S^2\times S^2$ that are almost-complex with a generalized complex structure. The first step is to consider the product $\Sigma_2\times \Sigma_g$ of a genus 2 surface with a genus $g \geq 3$ surface equipped with the product symplectic form. Let $a_i, b_i, c_j$ and $d_j$ ($i = 1, 2$, $j = 1, \ldots, g$) be the standard generators of $\pi_1(\Sigma_2)$ and $\pi_1(\Sigma_g)$ respectively. We encode the topological information we need in the following proposition.  In the notation regarding the fundamental group of the manifolds that undergo surgery, each relation is associated to the torus on which a surgery introduces the aforementioned relation.

\begin{proposition} {\label{Proposition 11}} \emph{(cf. \cite[Section 2]{[AP]})}. The first homology group is given by\\ $H_1(\Sigma_2\times \Sigma_g; \mathbb{Z}) = \mathbb{Z}^{4 + 2g}$, and it is generated by the loops $a_1, b_1, a_2, b_2, c_1, d_1, \cdots c_g, d_g$. The second homology group $H_2(\Sigma_2 \times \Sigma_g;\mathbb{Z}) = \mathbb{Z}^{8g + 2}$ is generated by $8g$ tori, and the surfaces $[\Sigma_2 \times \{pt\}], [\{pt\} \times \Sigma_g]$: the tori $a_i\times c_j$ and $a_i\times d_j$ are geometrically dual to $b_i\times d_j$ and $b_i\times c_j$ respectively, and the surface $\Sigma_2 \times \{pt\}$ is geometrically dual to $\{pt\} \times \Sigma_g$. The intersection form over the integers is even, and it is given by $4g + 1$ hyperbolic summands; the pairs of tori contribute $4g$ summands, and one summand is contributed by the surfaces $\Sigma_2\times \{pt\}$ and $\{pt\}\times \Sigma_g$. The characteristic numbers are given by $e(\Sigma_2\times \Sigma_g) = 4g - 4$, and $\sigma(\Sigma_2\times \Sigma_g) = 0$.\\

The manifold $X_g$ with $b_1(X_g) = 0$ obtained from applying $4 + 2g$ $(p, q, r)$-torus surgeries to $\Sigma_2\times \Sigma_g$ $(p, q\in \{0, 1\}, p \neq q)$ has fundamental group generated by the elements $a_1, b_1, a_2, b_2, c_1, d_1, \ldots, c_n, d_n$, and the following relations hold.
\begin{center}
$T_1:a_1 = [b_1^{-1}, d_1^{-1}]^r, T_2: b_1 = [a_1^{-1}, d_1]^r, T_3:a_2 = [b_2^{-1}, d_2^{-1}]^r, T_4: b_2  = [a_2^{-1}, d_2]^r$
\end{center}
\begin{center}
$T_5: c_1 = [d_1^{-1}, b_2^{-1}]^r, T_6: d_1  = [c_1^{-1}, b_2]^r, T_7: c_2  = [d_2^{-1}, b_1^{-1}]^r, T_8:d_2  = [c_2^{-1}, b_1]^r$
\end{center}
\begin{center}
$T_9: c_3  = [a_1^{-1}, d_3^{-1}]^r, T_{10}: d_3 = [a_2^{-1}, c_3^{-1}]^r, \ldots, T_{3 + 2g}: c_g = [a_1^{-1}, d_g^{-1}]^r$, $T_{4 + 2g}: d_g = [a_2^{-1}, c_g^{-1}]^r$.
\end{center}

\end{proposition}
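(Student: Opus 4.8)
\medskip

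The plan is to establish Proposition \ref{Proposition 11} in two parts, handling the homological statements first and then the fundamental-group computation, since these rely on rather different machinery. For the homology of $\Sigma_2 \times \Sigma_g$, I would invoke the K\"unneth formula. The first homology follows immediately: $H_1(\Sigma_2 \times \Sigma_g) \cong H_1(\Sigma_2) \oplus H_1(\Sigma_g) \cong \Z^4 \oplus \Z^{2g} = \Z^{4+2g}$, with the stated generators. For $H_2$, the K\"unneth decomposition $H_2(\Sigma_2 \times \Sigma_g) \cong (H_2(\Sigma_2) \otimes H_0(\Sigma_g)) \oplus (H_1(\Sigma_2) \otimes H_1(\Sigma_g)) \oplus (H_0(\Sigma_2) \otimes H_2(\Sigma_g))$ gives $\Z \oplus \Z^{4 \cdot 2g} \oplus \Z = \Z^{8g+2}$. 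The two outer summands correspond to the surfaces $[\Sigma_2 \times \{pt\}]$ and $[\{pt\} \times \Sigma_g]$, while the middle summand is generated by the $8g$ product tori $a_i \times c_j, a_i \times d_j, b_i \times c_j, b_i \times d_j$. The duality and intersection-form statements then follow from the K\"unneth description of the cup product: the intersection number of $\alpha \times \beta$ with $\alpha' \times \beta'$ is $(\alpha \cdot \alpha')(\beta \cdot \beta')$ up to sign, so the standard symplectic pairings on each factor produce exactly the claimed geometric duals and the $4g+1$ hyperbolic summands. The characteristic numbers $e = e(\Sigma_2)e(\Sigma_g) = (-2)(2-2g) = 4g-4$ and $\sigma = 0$ (the latter since the intersection form is a sum of hyperbolics) are routine.

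\medskip

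For the fundamental-group statement, I would begin from $\pi_1(\Sigma_2 \times \Sigma_g) \cong \pi_1(\Sigma_2) \times \pi_1(\Sigma_g)$ and apply Lemma \ref{Lemma BK} once for each of the $4+2g$ torus surgeries. Each $(p,q,r)$-surgery on a torus $T_k$ with meridian $\mu_k$ and push-offs $m_k, l_k$ adjoins the relation $\mu_k^r m_k^p l_k^q = 1$ to the presentation of the complement. The key input is the explicit identification of the meridians $\mu_k$ as commutators in the \emph{other} factor's generators, exactly as tabulated for the genus-2 case and extended in \cite{[AP]}: for instance a surgery on $a_1 \times c_1$ has meridian $[b_1^{-1}, d_1^{-1}]$. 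With $p, q \in \{0,1\}$ and $p \neq q$, the surgery relation reads (up to the choice of which push-off is killed) $m_k = \mu_k^{-r}$ or $l_k = \mu_k^{-r}$, which for the appropriate surgery coefficients yields precisely the relations listed as $T_1, \ldots, T_{4+2g}$. I would verify that the eight genus-$2$ relations reproduce the surgeries of Section \ref{Section 4.1} and that the remaining $2g-4$ relations for $T_9, \ldots, T_{4+2g}$ follow the periodic pattern $c_j = [a_1^{-1}, d_j^{-1}]^r$, $d_j = [a_2^{-1}, c_j^{-1}]^r$ coming from surgeries on the tori $a_1 \times c_j$ and $a_2 \times d_j$.

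\medskip

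The main obstacle I anticipate is \emph{not} the K\"unneth bookkeeping but the careful choice of base-paths and push-offs needed to write each meridian as the specific commutator appearing in the relations. The subtlety is that the meridian of a Lagrangian torus, computed in the complement, depends on the chosen path from the basepoint to the boundary of the tubular neighborhood, and only with a consistent system of such paths do the commutators assemble into the clean form stated. Here I would lean heavily on the corresponding construction in \cite[Section 2]{[AP]}, which sets up exactly these paths for the genus-$g$ product and shows that the fundamental group of the complement of all $4+2g$ tori remains generated by the original $4+2g$ loop classes (no new meridian generators survive). Granting that setup, confirming that $b_1(X_g) = 0$ is a consistency check: abelianizing the listed relations with $r=1$ forces each generator into the commutator subgroup, so $H_1(X_g) = 0$, matching the homological prediction $b_1 = b_1(\Sigma_2 \times \Sigma_g) - (4+2g) = (4+2g) - (4+2g) = 0$ from the Betti-number change recorded in Section \ref{Section 3.1}.
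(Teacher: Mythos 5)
Your proposal is correct and takes essentially the same route as the paper: the paper likewise treats the K\"unneth and intersection-form statements as routine (they are explicitly ``left to the reader''), and it reduces the fundamental-group claim to Lemma \ref{Lemma BK} applied to each surgery together with the meridian/push-off data established in the literature, citing the generalization of \cite{[FPS]} and the thesis \cite{[MCH]} for the base-path bookkeeping exactly where you defer to \cite{[AP]}. The only point to flag is the harmless convention issue that Lemma \ref{Lemma BK} gives the relation $\mu_k^r m_k^p l_k^q = 1$, so the stated relations hold up to replacing a commutator by its inverse (i.e.\ swapping its entries), an ambiguity the paper glosses over just as you do.
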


The proof is a generalization of \cite[Section 4]{[FPS]}. The claims concerning the submanifolds, and their homological properties are left to the reader. A proof for the nontrivial claim regarding the fundamental group calculations can be found in \cite{[MCH]}. Notice that in the notation of Proposition \ref{Proposition 11}, the relation introduced to the fundamental group by the corresponding $(p, q, r)$-surgery appears associated to its corresponding homologically essential torus. For example, $T_1: a_1 = [b_1^{-1}, d_1^{-1}]^1$ stands for the relation coming from a $(1, 0, 1)$-surgery. The $(1, 0, 0)$-surgery of Theorem \ref{Theorem 6} applied to $T_1$ is then $T_1: a_1 =   [b_1^{-1}, d_1^{-1}]^0 = 1$.\\


We proceed to construct a generalized complex structure on the connected sums $(2g - 3)(S^2\times S^2)$ for $g\geq 3$. Using \cite[Lemma 1.6]{[Go]}, we can perturb the symplectic form on $\Sigma_2\times \Sigma_g$ so that the tori $\{T_1, T_2, T_3, \cdots T_{4 + 2g}\}$ become symplectic \cite[Lemma 1.6]{[Go]}. Let $X_g$ be the manifold of Proposition \ref{Proposition 11} obtained by applying torus surgeries with $r = 0$. Theorem \ref{Theorem 6} implies that $X_g$ admits a generalized complex structure.\\

By the presentation of the group $\pi_1(X_g)$ given in Proposition \ref{Proposition 11}, this implies that the manifold $X_g$ is simply connected. A direct computation of the characteristic numbers yields $e(X_g) = e(\Sigma_2\times \Sigma_g) = -2 \cdot (2 - 2g) = 4g - 4$, and $\sigma(X_g) = \sigma(\Sigma_2\times \Sigma_g) = 0$. By Freedman's Theorem \cite{[F]}, $X_g$ has the homeomorphism type of $(2g - 3)(S^2\times S^2)$. \\


We are ready to prove the following result.

\begin{theorem}{\label{Theorem 12}} Let $m\geq 1$. The connected sum \begin{center}$m(S^2\times S^2)$ \end{center}admits a generalized complex structure if and only if it admits an almost-complex structure.
\end{theorem}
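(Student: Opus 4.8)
The plan is to prove both directions of the biconditional, noting that the forward direction is essentially immediate and the reverse direction is where the substance lies. For the forward implication, recall that the existence of an almost-complex structure is the only known obstruction to carrying a twisted generalized complex structure (indeed, every generalized complex manifold of even dimension admits an almost-complex structure, as the complex structure $\mathcal{J}$ on $TM \oplus T^{\ast}M$ induces one); so if $m(S^2 \times S^2)$ admits a generalized complex structure it certainly admits an almost-complex one. This direction requires only citing the general theory of Section \ref{Section 2}.

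The reverse direction is the heart of the matter, and here I would invoke the explicit constructions already assembled in this section. First I would recall the classical fact (due to Wu) that a connected sum of copies of $S^2 \times S^2$ is almost-complex precisely when the relevant characteristic-number congruence is satisfied; for $m(S^2\times S^2)$ one checks that $m$ must be \emph{odd}, since $b_2^+ = b_2^- = m$ forces $1 - b_1 + b_2^+$ to be odd, i.e.\ $1 + m$ even. Thus the almost-complex hypothesis pins down $m$ to be odd, say $m = 2g - 3$ for an integer $g \geq 3$ (together with the small base case $m = 1$). The strategy is then to match each such $m$ to a manifold produced by the torus-surgery machinery: Theorem \ref{Theorem MT} already supplies a generalized complex structure on $S^2 \times S^2$ (the case $m=1$), and the discussion preceding this theorem shows that the manifold $X_g$ obtained by performing the $4 + 2g$ torus surgeries of Proposition \ref{Proposition 11} with $r = 0$ is simply connected, has $e(X_g) = 4g - 4$ and $\sigma(X_g) = 0$, hence by Freedman's theorem is homeomorphic to $(2g-3)(S^2 \times S^2)$, and—crucially—carries a generalized complex structure by Theorem \ref{Theorem 6}.

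The key steps, in order, are therefore: (1) settle the forward direction by the general obstruction statement; (2) determine exactly which $m$ admit an almost-complex structure, reducing to $m$ odd; (3) reparametrize $m = 2g - 3$ and, for $g \geq 3$, invoke the construction of $X_g$ together with Theorem \ref{Theorem 6} to equip it with a generalized complex structure; (4) verify via the characteristic numbers and $\pi_1(X_g) = \{1\}$ that $X_g$ is homeomorphic to the intended connected sum, appealing to Freedman; and (5) dispose of the base case $m = 1$ using Theorem \ref{Theorem MT}.

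The main obstacle I anticipate is the passage from \emph{homeomorphic} to the statement as phrased: the torus surgeries yield a manifold homeomorphic to $m(S^2 \times S^2)$, but one must argue that it is genuinely diffeomorphic to the standard connected sum in order to conclude that the \emph{standard} smooth manifold $m(S^2\times S^2)$ carries the structure. Here I would lean on the fact, emphasized in the even-intersection-form discussion of Section \ref{Section 4.1}, that each $(p,q,0)$-surgery changes the intersection form by a hyperbolic summand while reducing the genus of the distinguished surfaces $\Sigma_2 \times \{pt\}$ and $\{pt\} \times \Sigma_g$ to spheres, so that the resulting manifold is visibly a standard connected sum of $S^2 \times S^2$'s rather than merely a homeomorph; making this geometric reduction precise is the delicate point, whereas the numerical and fundamental-group bookkeeping is routine given Proposition \ref{Proposition 11}.
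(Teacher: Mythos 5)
Your proposal is correct and follows essentially the same route as the paper: the forward direction by the general obstruction statement, and the reverse direction by equipping the surgered manifold $X_g$ of Proposition \ref{Proposition 11} with a generalized complex structure via Theorem \ref{Theorem 6}, handling $m=1$ by Theorem \ref{Theorem MT}, and then upgrading from homeomorphism to diffeomorphism exactly as the paper does---by observing that the $(p,q,0)$-surgeries reduce the genus of the surfaces $\Sigma_2\times\{pt\}$, $\{pt\}\times\Sigma_g$ and of the surviving geometrically dual tori, turning them into sphere pairs that exhibit $X_g$ as a standard connected sum of copies of $S^2\times S^2$. The ``delicate point'' you flag at the end is precisely the content of the paper's proof, resolved by the same geometric genus-reduction argument you sketch.
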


In particular, $m = (2g - 3)$ is an odd number. The necessity of the existence of an almost-complex structure is clear \cite{[G]}. Concerning the sufficiency of the condition, it remains to be demonstrated that the manifold $X_g$ constructed above is $(2g - 3)(S^2\times S^2)$.

\begin{proof} Theorem \ref{Theorem 6} implies $X_g$ admits a generalized complex structure. We claim that $X_g$ is diffeomorphic to $(2g - 3)(S^2\times S^2)$. For the sake of clarity, we work out the case $g = 3$ by looking at the effect the surgeries have on the submanifolds of $\Sigma_2\times \Sigma_3$; the proofs for the cases $g > 3$ follow a natural generalization. The torus surgeries that were performed on $\Sigma_2\times \Sigma_3$ transform the imbedded surfaces $\Sigma_2 \times \{x\}$ and $\{x\}\times \Sigma_3$ into spheres, by reducing their genus. Moreover, according to Proposition \ref{Proposition 11}, there are four remaining homologically essential submanifolds that are geometrically dual. Inside $\Sigma_2\times \Sigma_3$, these are tori, which after the surgeries become geometrically dual 2-spheres. For example, the $(1, 0, 0)$-surgery on $T_1$ reduces the genus of the tori $a_1\times c_2, a_1\times c_3$, and the genus of their dual tori as well. Thus, becoming imbedded 2-spheres. Each of these pairs span an $S^2\times S^2$ summand in $X_g$. Therefore, $X_3$ is diffeomorphic to $3(S^2\times S^2)$. The Seiberg-Witten invariant of $X_3$ is trivial, and it is not symplectic. It is not a complex manifold by Kodaira's classification of complex surfaces \cite{[Ko], [BHPV]}.
\end{proof}

\begin{remark}{\label{Remark M}} Number of loci. \emph{Let $g\geq 3$. The generalized complex structures built in Theorem \ref{Theorem 12} have a number of $4 + 2g$ type change loci. Following the method described in Section \ref{Section 4.1}, generalized complex structures with a different number of loci can be produced.}

\end{remark}

\subsection{Preview of other fundamental groups}\label{Section 4.3} Constructions of non-simply connected 4-manifolds are given in Section \ref{Section 6}. However, At this stage we would like to point out that a variation on the coefficients of the torus surgeries in the previous constructions promptly yields twisted generalized complex 4-manifolds with several other fundamental groups. A sample for abelian groups is the following.\\

\begin{proposition} {\label{Proposition 13}} Let $g\geq 2$. The homeomorphism types corresponding to
\begin{itemize} 
\item $\pi_1 = \Z/p\Z: (2g - 3)(S^2\times S^2)\# \widetilde{L(p, 1)\times S^1}$
\item $\pi_1 = \Z/p\Z \oplus \Z/q\Z: (2g - 1)(S^2\times S^2) \# \widehat{L(p, 1)\times S^1}$
\item $\pi_1 = \Z: (2g)(S^2\times S^2)\# S^3\times S^1$
\end{itemize}
 admit a twisted generalized complex structure, which does not come from a symplectic nor a complex structure.
\end{proposition}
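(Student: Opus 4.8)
The plan is to mimic the construction carried out in Section \ref{Section 4.2} for the simply connected case, but to choose the surgery coefficients so that the resulting fundamental group is the prescribed abelian group rather than the trivial group, and then to identify the homeomorphism type via Freedman's theorem together with a characteristic-number computation. The starting point in each case will again be a product $\Sigma_2 \times \Sigma_g$ equipped with the product symplectic form, together with the collection of Lagrangian tori $\{T_1, \ldots, T_{4 + 2g}\}$ and the presentation of $\pi_1$ given in Proposition \ref{Proposition 11}. As before, I would first perturb the symplectic form using \cite[Lemma 1.6]{[Go]} so that the tori on which the type-changing $(p,q,0)$-surgeries are to be performed become symplectic, and apply Theorem \ref{Theorem 6} to equip the output with a twisted generalized complex structure.

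The key step is the bookkeeping on the fundamental group. For $\pi_1 = \Z$, one leaves a single generator alive: I would perform $(p,q,0)$- and Luttinger $(p,q,\pm1)$-surgeries on all but one of the tori, choosing the coefficients (via the relations listed in Proposition \ref{Proposition 11}) so that every generator except, say, $c_1$ is killed, while the relation that would kill $c_1$ is omitted. The surviving $\Z$ accounts for the $S^3 \times S^1$ summand, whose presence is forced because $b_1$ of the output is now $1$; the twisting $3$-form $H$ is then a generator of $H^3 \cong \Z$, as indicated in Remark \ref{Remark 1}. For the cyclic and bi-cyclic cases one instead performs surgeries with coefficients $(p,q,r)$ chosen so that a relation of the form (generator)$^p = 1$ survives, producing $\Z/p\Z$ (and analogously $\Z/p\Z \oplus \Z/q\Z$); the fibered summands $\widetilde{L(p,1)\times S^1}$ and $\widehat{L(p,1)\times S^1}$ record the corresponding lens-space-type pieces. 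Throughout, Lemma \ref{Lemma BK} governs exactly which normally-generated element each surgery introduces.

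Once the fundamental group is pinned down, I would compute $e$ and $\sigma$, both of which are invariant under torus surgeries (Section \ref{Section 3.1}), to match the characteristic numbers of the claimed connected sum; since the intersection form changes only by hyperbolic summands, it remains even in the spin cases and of the correct type otherwise, so Freedman's theorem \cite{[F]} identifies the homeomorphism type (applied, as is standard for non-simply connected targets, after verifying that the fundamental group and equivariant intersection form agree). As in the proof of Theorem \ref{Theorem 12}, the $(p,q,0)$-surgeries reduce the genus of the surfaces $\Sigma_2 \times \{x\}$ and $\{x\} \times \Sigma_g$ and of the dual tori, producing the $S^2 \times S^2$ summands. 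Finally, to see that the structure is genuinely new—neither symplectic nor complex—I would invoke Theorem \ref{Theorem 9}: the output contains an essential sphere of non-negative self-intersection (coming from a reduced-genus surface) that is not torsion, so the Seiberg-Witten invariant vanishes and Taubes' theorem \cite{[Ta1]} rules out a symplectic structure, while Kodaira's classification \cite{[Ko], [BHPV]} rules out a complex one.

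The main obstacle I expect is the precise fundamental-group computation in the cyclic cases: unlike the simply connected construction, here one must verify that the chosen mixture of $(p,q,0)$- and $(p,q,r)$-surgeries leaves \emph{exactly} the desired finite quotient and not a larger or smaller group, which requires carefully tracking the interplay between the commutator relations of Proposition \ref{Proposition 11} and the normal subgroups $N(\mu_T^r m_T^p l_T^q)$ introduced by each surgery. Identifying the correct lens-space summands $\widetilde{L(p,1)\times S^1}$ and $\widehat{L(p,1)\times S^1}$ and confirming that the resulting manifold splits as the stated connected sum (rather than merely sharing its homeomorphism invariants) is the delicate point.
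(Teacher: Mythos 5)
Your surgical construction follows the paper's route essentially verbatim: for $\pi_1 = \Z$ one simply does \emph{not} perform one of the torus surgeries of Theorem \ref{Theorem 12}, and for the finite cyclic cases one replaces a $(1,0,0)$-coefficient by $(p,0,0)$ with $p \neq 0$, the effect on $\pi_1$ being governed by Lemma \ref{Lemma BK}; likewise your argument ruling out symplectic and complex structures (adjunction inequality, Taubes, Kodaira) is the paper's. The genuine gap is in the homeomorphism identification. Freedman's theorem \cite{[F]}, as used in the paper, classifies \emph{simply connected} $4$-manifolds, and there is no ``standard'' extension to non-simply connected targets of the kind you invoke: for a general fundamental group, $\pi_1$ together with the equivariant intersection form does \emph{not} determine the homeomorphism type, and even formulating a correct statement requires the group to be good in Freedman's sense plus genuinely new input. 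This is precisely why the paper appeals to specific classification theorems rather than to \cite{[F]}: Hambleton--Teichner \cite[Corollary 3]{[HT]} for $\pi_1 = \Z$, Hambleton--Kreck \cite[Theorem C]{[HK]} for finite cyclic fundamental groups, and Hambleton--Kreck \cite[Theorem B]{[HK]} for $\Z/p\Z \oplus \Z/q\Z$.

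Moreover, these criteria require data that your outline never verifies. For instance, in the case $p > 1$ the Hambleton--Kreck classification involves the $w_2$-type, and the paper explicitly records that the universal cover of the resulting manifold is nonspin; your remark that the intersection form ``remains even in the spin cases'' does not engage with this invariant at all. You correctly flag the connected-sum splitting and the identification of the summands $\widetilde{L(p,1)\times S^1}$ and $\widehat{L(p,1)\times S^1}$ as the delicate point, but you leave it unresolved --- and resolving it is exactly what the citations to \cite{[HT]} and \cite{[HK]} accomplish in the paper: one never proves a smooth splitting, one matches the invariants demanded by those theorems and concludes a homeomorphism. Without replacing your appeal to Freedman by these (or equivalent) classification results, the final step of your proof does not go through.
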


The pieces $\widetilde{L(p, 1)\times S^1}$ and $\widehat{L(p, 1)\times S^1}$ stand for the manifolds obtained by modifying the product $L(p, 1)\times S^1$  of a Lens space with a circle as follows. Perform a surgery on $L(p, 1)\times S^1$ along $\{x\} \times \alpha$ ($x \in L(p, 1)$) to kill the loop corresponding to the generator of the infinite cyclic group factor so that $\pi_1 = \Z/p\Z$ of the resulting manifold comes from the fundamental group of the Lens space. This amounts to removing a neighborhood of the loop $S^1\times D^3$ and glueing in a $S^2\times D^2$. Denote such a manifold by $\widetilde{L(p, 1)\times S^1}$. Applying the same procedure to the loop $\{x\} \times \alpha^q$ results in a 4-manifold with $\pi_1 = \Z/p\Z \oplus \Z/q\Z$. Such manifold is denoted by $\widehat{L(p, 1) \times S^1}$.

\begin{proof} These manifolds are constructed by changing the surgery coefficients on one or two of the torus surgeries used in the proof of Theorem \ref{Theorem 12} following Lemma \ref{Lemma BK}. Generalized complex manifolds with infinite cyclic fundamental group are built by \emph{not} performing one of the torus surgeries. Twisted generalized complex manifolds with finite cyclic fundamental group are built by changing one of the surgery coefficients $(1, 0, 0)$ for a $(p, 0, 0)$ with $p \neq 0$. 
The homeomorphism criteria in the infinite cyclic fundamental group case is due to Hambleton and Teichner \cite[Corollary 3]{[HT]}. If $p > 1$, then one obtains a twisted generalized complex manifold with finite cyclic fundamental groups, and its universal cover is nonspin. The 3-form $H$ is given by the generator of $H^3$. The corresponding homeomorphism criteria is due to Hambleton and Kreck, and it is given in \cite[Theorem C]{[HK]}. The case $\pi_1 = \Z/p\Z \oplus \Z/q\Z$ is left as an exercise; the homeomorphism criteria, also due to Hambleton and Kreck, is given in \cite[Theorem B]{[HK]}.\\

The argument minding the lack of a symplectic or a complex structure is verbatim to the one given for Theorem \ref{Theorem 12}.
\end{proof}

\section{Constructions of non-symplectic and non-complex generalized complex manifolds through the assemblage of symplectic ones}\label{Section 5}

As an immediate corollary to the main result of the previous section, we construct new generalized complex structures on non-spin simply connected almost-complex 4-manifolds. We also recover an existence result of Cavalcanti and Gualtieri.

\begin{proposition} {\label{Proposition 14}} \emph{(cf. \cite[Section 5]{[CG]})}. Let $n\geq m$. The manifolds
\begin{center}
$m\mathbb{CP}^2 \# n \overline{\mathbb{CP}^2}$
\end{center}
admit a generalized complex structure if and only if they admit an almost-complex one. Moreover, there exist generalized complex structures on $m\mathbb{CP}^2 \# n \overline{\mathbb{CP}^2}$ with more than one type change loci.
\end{proposition}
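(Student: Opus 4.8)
The plan is to establish the statement in two halves, mirroring the structure of Theorem \ref{Theorem 12}. The necessity of an almost-complex structure is immediate from Gualtieri's obstruction \cite{[G]}, so the content lies in the sufficiency. For the manifolds $m\mathbb{CP}^2 \# n\overline{\mathbb{CP}^2}$, recall that these admit an almost-complex structure precisely when $m$ is odd (equivalently, when $b_2^+ = m$ is odd, so that $1 + b_2^+$ is even as required for the existence of an almost-complex structure by Wu's formula). The strategy is to start from a manifold already known to carry a generalized complex structure---namely one of the spin manifolds $m'(S^2\times S^2)$ built in Theorem \ref{Theorem 12}, or an elliptic surface, or $\Sigma_2 \times \Sigma_g$ after the $(p,q,0)$-surgeries of Theorem \ref{Theorem 6}---and then repeatedly apply the blow-up operation of Theorem \ref{Theorem 7} to produce the non-spin connected sums with $\overline{\mathbb{CP}^2}$ summands.

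First I would fix the arithmetic of the construction. Each generalized complex blow-up at a nondegenerate complex point adds a $\overline{\mathbb{CP}^2}$ summand, changing $(m, n)$ to $(m, n+1)$ while preserving the generalized complex structure by Theorem \ref{Theorem 7}. The base case to target is a manifold of the form $m\mathbb{CP}^2 \# n_0\overline{\mathbb{CP}^2}$ for the minimal admissible $n_0$, and from there I reach every larger $n$ by iterated blow-ups. To obtain the base case I would exploit the relation between the spin manifolds of the previous section and the standard connected sums: since $(2g-3)(S^2\times S^2)$ carries a generalized complex structure with many nondegenerate complex points (the type-change loci arising from the $(p,q,0)$-surgeries), and since blowing up a $S^2\times S^2$ summand at such a point yields a manifold diffeomorphic to $\mathbb{CP}^2 \# 2\overline{\mathbb{CP}^2}$ type alterations, I can convert spin summands into the non-spin $m\mathbb{CP}^2 \# n\overline{\mathbb{CP}^2}$ form. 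Alternatively, and more cleanly, I would run the surgery construction of Section \ref{Section 4.2} directly on $\Sigma_2 \times \Sigma_g$ with a choice of coefficients producing the correct homeomorphism type, identify the resulting even/odd intersection form via Freedman's theorem \cite{[F]}, and then adjust parity by a single blow-up.

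The claim about multiple type-change loci is the part I would treat with most care, since it is the genuinely new assertion distinguishing these structures from those of Cavalcanti--Gualtieri in \cite{[CG]}, whose construction uses a single $(p,q,0)$-surgery on an elliptic surface and hence produces only one locus. The key observation is that Theorem \ref{Theorem 6} permits the $(p,q,0)$-surgery to be performed simultaneously on a collection of disjoint symplectic tori, each contributing its own type-change locus. Thus I would select two or more disjoint homologically essential Lagrangian tori inside the symplectic base manifold, perturb the symplectic form so that each becomes symplectic via \cite[Lemma 1.6]{[Go]}, perform simultaneous $(p,q,0)$-surgeries, and then blow up to reach the desired $m\mathbb{CP}^2 \# n\overline{\mathbb{CP}^2}$ while keeping the blow-up points away from the type-change loci. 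Since blow-up is a local operation at a single nondegenerate point (Theorem \ref{Theorem 7}), it leaves the previously constructed loci intact, so the final manifold inherits more than one type-change locus.

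The main obstacle I anticipate is bookkeeping the diffeomorphism type after the combination of surgeries and blow-ups: I must verify that the simultaneous surgeries still collapse the genus-$g$ surfaces $\Sigma_2 \times \{pt\}$ and $\{pt\} \times \Sigma_g$ to spheres (as in the proof of Theorem \ref{Theorem 12}), that the fundamental group is killed by the chosen relations of Proposition \ref{Proposition 11}, and that the resulting intersection form---after the odd number of blow-ups needed to fix parity---matches that of $m\mathbb{CP}^2 \# n\overline{\mathbb{CP}^2}$, so that Freedman's classification \cite{[F]} yields the homeomorphism and the smooth identification follows from the explicit surgical description. Keeping the blow-up loci disjoint from the surgery tori, while arranging the arithmetic of $(m,n)$ to hit every admissible pair with $n \geq m$, is the step where the argument is easiest to get subtly wrong.
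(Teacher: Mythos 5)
Your proposal follows essentially the same route as the paper: blow up the spin generalized complex manifolds $(2g-3)(S^2\times S^2)$ of Theorem \ref{Theorem 12} via Theorem \ref{Theorem 7}, use the standard diffeomorphism $S^2\times S^2 \# \overline{\mathbb{CP}^2} \cong \mathbb{CP}^2 \# 2\overline{\mathbb{CP}^2}$ iteratively to identify the result with $m\mathbb{CP}^2 \# n\overline{\mathbb{CP}^2}$, adjust $n$ by further blow-ups and blow-downs, and note that the multiple type change loci (inherited from the simultaneous $(p,q,0)$-surgeries, cf.\ Remark \ref{Remark M}) survive because blow-up is local at a nondegenerate point. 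This is exactly the paper's argument, so the proposal is correct and not materially different.
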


In particular, $m$ is an odd number. We set $m = 2g - 3$. Blowing up (Theorem \ref{Theorem 7}) and/or blowing down (Theorem \ref{Theorem 8})  $(2g - 3)(S^2\times S^2)$ produces generalized complex structures on $(2g - 3)\mathbb{CP}^2 \# n \overline{\mathbb{CP}}^2$ that can be chosen to have as many as $4 + 2g$ type change loci. As it was mentioned in Remark \ref{Remark M}, the construction can be modified to obtain a different number of loci (cf. Section \ref{Section 4.1}). For the sake of simplicity, we will give the argument for the case $g = 3$; the other cases follow verbatim. The symbol $X = Y$ indicates a diffeomorphism between manifolds $X$ and $Y$. 

\begin{proof} The necessity of the existence of an almost complex structure is clear \cite{[G]}. Let $X_3$ be the blow up of the generalized complex manifold $3(S^2\times S^2)$. Theorem \ref{Theorem 7} implies $X_3$ is generalized complex. We have
\begin{center}
$X_3 = 3(S^2\times S^2) \# \overline{\mathbb{CP}^2} = 2(S^2\times S^2) \# (S^2\times S^2\# \overline{\mathbb{CP}^2}) =$\\ $= 2(S^2\times S^2) \# \mathbb{CP}^2 \# 2 \overline{\mathbb{CP}^2} = (S^2\times S^2 \# \overline{\mathbb{CP}^2}) \#  (S^2\times S^2 \# \overline{\mathbb{CP}^2}) \# \mathbb{CP}^2 =$\\ $= (\mathbb{CP}^2 \# 2 \overline{\mathbb{CP}^2}) \# (\mathbb{CP}^2 \# 2 \overline{\mathbb{CP}^2}) \# \mathbb{CP}^2 = 3\mathbb{CP}^2 \# 4 \overline{\mathbb{CP}^2}$.
\end{center}
An iteration of the usage of blow ups and blow downs (Theorems \ref{Theorem 7} and \ref{Theorem 8} ) allows us to conclude that the manifold  $3\mathbb{CP}^2 \# n \overline{\mathbb{CP}^2}$ admits a generalized complex structure.
\end{proof}

For the remaining part of the section we proceed to consider more general production schemes of generalized complex structures on manifolds that are neither symplectic nor complex, by taking a symplectic manifold as a starting point. The following constructions are motivated by \cite{[SZs]}.

\begin{proposition} {\label{Proposition 15}} Assume $h\geq 1$. Let $X$ be a simply connected symplectic \\4-manifold that contains a symplectic surface of genus two $\Sigma \subset X$ that satisfies $[\Sigma]^2 = 0$, and $\pi_1(X - \Sigma) = 1$. The manifolds
\begin{center}
$Z_{X, h}:= X \# 2h (S^2\times S^2)$ and\\
$Z'_{X, h}:= X \# 2h (\mathbb{CP}^2 \# \overline{\mathbb{CP}^2})$
\end{center}
admit a generalized complex structure.
\end{proposition}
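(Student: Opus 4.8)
The plan is to build $Z_{X,h}$ as a Gompf symplectic sum followed by $(p,q,0)$-torus surgeries, in direct analogy with the proof of Theorem~\ref{Theorem 12}. Because $[\Sigma]^2=0$, the genus-two surface $\Sigma$ has trivial normal bundle, with tubular neighborhood $\Sigma\times D^2$; this lets me form the symplectic fiber sum
\[
W := X \#_{\Sigma}(\Sigma_2\times\Sigma_h),
\]
identifying $\Sigma\subset X$ with the symplectic surface $\Sigma_2\times\{pt\}\subset\Sigma_2\times\Sigma_h$ (both of genus two and of square zero). By Gompf's symplectic sum construction~\cite{[Go]}, $W$ carries a symplectic form.

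Inside the $\Sigma_2\times\Sigma_h$ piece lie the Lagrangian tori of Proposition~\ref{Proposition 11} that carry the loops $c_1,d_1,\dots,c_h,d_h$. Choosing these loops to avoid $pt$, the corresponding $2h$ tori survive disjointly in $W$, away from the gluing region. First I would apply Gompf's perturbation \cite[Lemma~1.6]{[Go]} to render these tori symplectic, and then perform the $(p,q,0)$-surgeries of Theorem~\ref{Theorem 6} simultaneously on all of them; Theorem~\ref{Theorem 6} equips the resulting manifold $W'$ with a twisted generalized complex structure possessing $2h$ type change loci. The fundamental group is controlled by the hypothesis $\pi_1(X-\Sigma)=1$: the push-offs $a_i,b_i$ and the meridian of $\Sigma$ die on the $X$-side of the sum, while the $r=0$ relations $c_j=d_j=1$ of Proposition~\ref{Proposition 11} kill the remaining generators, so that $\pi_1(W')=1$. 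In particular Remark~\ref{Remark 1} gives $H=0$, and $W'$ carries an honest generalized complex structure.

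The crux is to identify the diffeomorphism type of $W'$, and here I would imitate the surface-tracking argument of Theorem~\ref{Theorem 12}. The $(p,q,0)$-surgeries reduce the genus of the $\Sigma_h$-factor: the surface $\{pt\}\times\Sigma_h$ becomes an embedded sphere $S$ geometrically dual to $\Sigma$, and the $2h$ dual pairs of tori assembled from $c_j,d_j$ become $2h$ dual pairs of spheres, each spanning an $S^2\times S^2$. Thus the surgered block is
\[
B' \;\cong\; (\Sigma_2\times S^2)\,\#\,2h\,(S^2\times S^2),
\]
with $\Sigma_2\times\{pt\}$ sitting in the $\Sigma_2\times S^2$ factor together with its dual sphere $\{pt\}\times S^2$, and with the connected-sum necks located away from $\Sigma_2\times\{pt\}$. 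Since a neighborhood of $\Sigma_2\times\{pt\}$ is untouched by the surgeries, $W'=X\#_{\Sigma}B'$; and because the gluing surface now possesses a geometrically dual sphere of square zero, the fiber sum collapses to an ordinary connected sum, $X\#_{\Sigma}(\Sigma_2\times S^2)=X$ (one merely reglues a tubular neighborhood of $\Sigma$). Hence
\[
W' \;\cong\; X\,\#\,2h\,(S^2\times S^2) \;=\; Z_{X,h},
\]
the identities $e(W')=e(X)+4h$ and $\sigma(W')=\sigma(X)$ serving as a consistency check. The main obstacle is exactly this step: tracking the embedded surfaces and their normal framings through the surgeries, and justifying the principle that a fiber sum along a surface carrying a geometrically dual sphere is diffeomorphic to the corresponding connected sum.

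Finally, the family $Z'_{X,h}=X\#2h(\mathbb{CP}^2\#\overline{\mathbb{CP}^2})$ is produced by the identical scheme, performing the genus-reducing surgeries with the twisted normal framing so that each dual pair of spheres spans the nontrivial $S^2$-bundle $\mathbb{CP}^2\#\overline{\mathbb{CP}^2}$ in place of $S^2\times S^2$; equivalently, one fiber sums $X$ along $\Sigma$ with the nontrivial $S^2$-bundle over $\Sigma_2$. When $X$ is non-spin the two families coincide, so only the case of spin $X$ genuinely requires this variant. In either case both connected summands have $b_2^+\geq 1$, so the Seiberg-Witten invariants of $W'$ vanish and, by Taubes~\cite{[Ta1]}, $W'$ admits no symplectic structure, consistent with the aim of this section.
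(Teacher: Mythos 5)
Your construction of $Z_{X,h}$ is essentially the paper's own: the paper likewise forms the symplectic sum of $X$ with a product $\Sigma_2\times\Sigma_h$ along the genus-two surface (writing out $h=1$ with $\Sigma_2\times T^2$ and four surgeries rather than your two, and invoking Proposition~\ref{Proposition 11} for $h\geq 2$), uses $\pi_1(X-\Sigma)=1$ together with the $r=0$ relations to kill the fundamental group, applies Theorem~\ref{Theorem 6} after Gompf's perturbation, and identifies the diffeomorphism type by watching the surviving tori compress to geometrically dual pairs of square-zero spheres. That half of your argument is fine, and is no less rigorous than the paper's at the delicate surface-tracking step.

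The genuine gap is in your treatment of $Z'_{X,h}$. The paper produces $Z'_{X,h}$ from $Z_{X,h}$ by blowing up a nondegenerate point (Theorem~\ref{Theorem 7}) and then blowing down (Theorem~\ref{Theorem 8}): the extra $\overline{\mathbb{CP}^2}$ makes the manifold non-spin, each $S^2\times S^2$ summand then converts to $\mathbb{CP}^2\#\overline{\mathbb{CP}^2}$, and the blow-down removes the surplus $\overline{\mathbb{CP}^2}$, carrying the generalized complex structure along at every stage. Your proposed variant --- rerunning the surgeries ``with the twisted normal framing'' so that the dual sphere pairs span $\mathbb{CP}^2\#\overline{\mathbb{CP}^2}$ --- cannot work: a sphere obtained by compressing one of the surviving tori is homologous to that torus, hence has self-intersection zero regardless of any framing choices made in surgeries performed elsewhere, and a geometrically dual pair of square-zero spheres always spans an $S^2\times S^2$ summand, never the twisted bundle. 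The alternative phrasing, fiber summing with the nontrivial $S^2$-bundle over $\Sigma_2$, is also unavailable: the surfaces in that bundle which could play the role of $\Sigma_2\times\{pt\}$ are sections, which have odd self-intersection and so cannot be matched with $[\Sigma]^2=0$ (nor would Theorem~\ref{Theorem 6} apply to such a setup). Your fallback that the two families coincide when $X$ is non-spin is correct (Wall), but the proposition permits spin $X$, and in that case $X\#2h(S^2\times S^2)$ is spin while $X\#2h(\mathbb{CP}^2\#\overline{\mathbb{CP}^2})$ is not, so they are not even homeomorphic --- exactly the case where your variant is required is the case where it fails. The repair is one line: apply Theorems~\ref{Theorem 7} and~\ref{Theorem 8} to $Z_{X,h}$, as in the proof of Proposition~\ref{Proposition 14}.
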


\begin{proof} We work out the case $h = 1$; the other cases follow verbatim from the argument. Consider the manifold $\Sigma_2 \times T^2$ endowed with the product symplectic form. Let $a_1, b_1, a_2, b_2$ be the generators of $\pi_1(\Sigma_2)$, and $x, y$ the generators of $\pi_1(T^2)$. It is easy to see that this manifold contains four pairs of homologically essential Lagrangian tori, and a symplectic surface of genus two. The tori are displayed below; the genus two surface is a parallel copy of the surface $\Sigma_2 \times \{pt\}$, and we will continue to call it $\Sigma_2$ during the proof.  According to Baldridge and Kirk \cite[Proposition 7]{[BK3]}, the fundamental group
\begin{center}$\pi_1(\Sigma_2\times T^2 - (\Sigma_2 \cup T_1 \cup \cdots \cup T_4))$\end{center}
is generated by the loops  $x, y, a_1, b_1, a_2, b_2$. Moreover,  with respect to certain paths to the boundary of the tubular neighborhoods of the $T_i$ and $\Sigma_2$, the meridians and two Lagrangian push offs are given by
\begin{itemize}
\item $T_1: m_1 = x, l_1 = a_1, \mu_1= [b^{-1}, y^{-1}]$,
\item $T_2:  m_2 = y, l_2 = b_1a_1b^{-1}, \mu_2 = [x^{-1}, b_1]$,
\item $T_3:  m_3 = x, l_3 = a_2, \mu_3 = [b_2^{-1}, y^{-1}]$,
\item $T_4: m_4 = y, l_4 = b_2a_2b_2^{-1}, \mu_4 = [x^{-1}, b_2]$,
\item $\mu_{\Sigma_2} = [x, y]$.
\end{itemize}

The loops $a_1, b_1, a_2, b_2$ lie on the genus 2 surface and form a standard set of generators; $[a_1, b_1][a_2, b_2] = 1$ holds.\\

Build the symplectic sum \cite[Theorem 1.3]{[Go]} \begin{center} $Z:= X\#_{\Sigma = \Sigma_2} \Sigma_2\times T^2$.\end{center}
Given that the loops $a_1, b_1, a_2. b_2$ lie on $\Sigma_2 \times \{x\} \subset \Sigma_2 \times T^2$ for $x\in T^2$, and the meridian is given by $\mu_{\Sigma_2} = [x, y]$, using van-Kampen's Theorem we see that our hypothesis $\pi_1(X - \Sigma) = 1$ implies $\pi_1(Z) = \Z x \oplus \Z y$.
Perturb the symplectic form so that the tori $\{T_1, T_2, T_3, T_4\}$ become symplectic \cite[Lemma 1.6]{[Go]}. Perform simultaneously a $(1, 0, 0)$-surgery on each of the tori to obtain a manifold $Z_{X, 2}$. By Theorem \ref{Theorem 6}, the manifold $Z_{X, 2}$ admits a generalized complex structure, which has four type change loci.
The surgeries set $x  = 1 = y$. Thus, $\pi_1(Z_{X, 2}) = 1$.
To conclude on the diffeomorphism type of $Z_{X, 2}$, we observe the effect that the torus surgeries has on the embedded submanifolds. The tori $y\times b_2$ and $y\times a_2$ become embedded 2-spheres, each of which is a factor of an $S^2\times S^2$ summands. Thus, $Z_{X, 2}$ is diffeomorphic to $X \# 2(S^2\times S^2)$.
In order to construct $Z_{X, h}$ for $h\geq 2$, one builds the symplectic sum \begin{center} $Z:= X \#_{\Sigma = \Sigma_2} \Sigma_2 \times \Sigma_h$\end{center}
and submits it to surgical procedure similar to the one described above, making use of Proposition \ref{Proposition 11}. The generalized complex manifold $Z'_{X, h}$ is obtained by blowing up at a nondegenerate point on $Z_{X, h}$ (Theorem \ref{Theorem 7}), and then blowing it down (Theorem \ref{Theorem 8}).

\end{proof}

To finalize this section,  we produce generalized complex structures on more general connected sums.

\begin{proposition} {\label{Proposition 16}} Suppose $h\geq 2$. Let $X$ and $Y$ be symplectic 4-manifolds that contain symplectic tori $T_X \subset X,  T_Y \subset Y$ such that $[T_X]^2 = 0 = [T_Y]^2$, and\\ $\pi_1(X) = \pi_1(X - T_X) = \pi_1(Y) = \pi_1(Y - T_Y) = 1$. The manifolds
\begin{center}
$Z_{X,Y, 2h - 1}:= X \# (2h - 1) (S^2\times S^2) \# Y$ and
$Z'_{X,Y, 2h - 1}:= X \# (2h - 1) (\mathbb{CP}^2 \# \overline{\mathbb{CP}^2}) \# Y$
\end{center}
admit a generalized complex structure.
\end{proposition}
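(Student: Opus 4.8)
The plan is to follow the same two-step recipe used for Proposition \ref{Proposition 15}: first realize the connected sum as the outcome of a sequence of $(1,0,0)$-torus surgeries on a symplectic building block, invoking Theorem \ref{Theorem 6} to equip it with a generalized complex structure, and then identify the diffeomorphism type by tracking the effect of the surgeries on embedded submanifolds. The key new feature here is that we have symplectic summands $X$ and $Y$ on \emph{both} ends, so the interpolating block must carry two genus-two symplectic surfaces to glue along. I would take as the central piece the product $\Sigma_2 \times \Sigma_h$ (with the product symplectic form), which contains two disjoint symplectic surfaces of genus two, namely two parallel copies $\Sigma_2 \times \{x\}$ and $\Sigma_2 \times \{x'\}$ of the first factor. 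After perturbing the symplectic form via \cite[Lemma 1.6]{[Go]} so that the relevant homologically essential Lagrangian tori of Proposition \ref{Proposition 11} become symplectic, one has a single symplectic 4-manifold equipped with the data needed for every subsequent operation.

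First I would form the iterated symplectic sum
\begin{center}
$Z := X \#_{T_X = \Sigma_2 \times \{x\}} (\Sigma_2 \times \Sigma_h) \#_{\Sigma_2 \times \{x'\} = T_Y} Y$,
\end{center}
gluing $X$ along one copy of the genus-two surface and $Y$ along the other, using Gompf's symplectic sum construction \cite[Theorem 1.3]{[Go]}. A van Kampen computation, entirely parallel to the one in Proposition \ref{Proposition 15} and relying on the hypotheses $\pi_1(X) = \pi_1(X - T_X) = \pi_1(Y) = \pi_1(Y - T_Y) = 1$ together with the fact that the gluing loops $a_i, b_i$ lie on the genus-two fibers, shows that $\pi_1(Z)$ is generated by the remaining $\Sigma_h$-factor loops. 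Here I must be mindful that $X$ and $Y$ contain symplectic \emph{tori} $T_X, T_Y$ of square zero, whereas the interpolating block offers genus-\emph{two} surfaces; I would resolve this by instead gluing $X$ and $Y$ to $\Sigma_2 \times \Sigma_h$ along tori $\{pt\} \times \Sigma_1$-type fibers, or equivalently reframe the building block so that the matching genera agree --- this matching of the gluing surfaces is the main point requiring care.

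Next I would perform simultaneous $(1,0,0)$-torus surgeries on the collection of symplectic tori supplied by Proposition \ref{Proposition 11}, so that Theorem \ref{Theorem 6} certifies that the resulting manifold $Z_{X,Y,2h-1}$ is generalized complex (with one type change locus per surgery). The surgeries kill the $\pi_1$-generators coming from the $\Sigma_h$-direction exactly as in the previous propositions, leaving $Z_{X,Y,2h-1}$ simply connected. To pin down the diffeomorphism type, I would observe that each $(1,0,0)$-surgery reduces the genus of a dual pair of tori until they become a geometrically dual pair of embedded 2-spheres, each pair spanning an $S^2 \times S^2$ summand; counting the surgeries against the Euler characteristic $e(\Sigma_2 \times \Sigma_h) = 4h-4$ should yield precisely $2h-1$ such summands, establishing $Z_{X,Y,2h-1} \cong X \# (2h-1)(S^2\times S^2) \# Y$. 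Finally, $Z'_{X,Y,2h-1}$ follows by blowing up at a nondegenerate complex point (Theorem \ref{Theorem 7}) and blowing down (Theorem \ref{Theorem 8}), converting each $S^2 \times S^2$ into a $\mathbb{CP}^2 \# \overline{\mathbb{CP}^2}$.

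The hard part will be the bookkeeping for the fundamental group and the summand count once \emph{two} symplectic pieces are attached to a single interpolating block: I expect the van Kampen argument to require verifying that the surgery relations genuinely trivialize all surviving generators (not merely the ones inherited from a single factor), and that the genus-reduction argument produces exactly $2h-1$ copies of $S^2 \times S^2$ after accounting for the two summands consumed by the symplectic gluings along $T_X$ and $T_Y$. Getting the arithmetic of the connected-sum count to match the stated $2h-1$ is where the construction must be calibrated precisely.
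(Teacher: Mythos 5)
Your high-level outline (symplectic sum with a product block, then torus surgeries certified by Theorem \ref{Theorem 6}, then blow-up/blow-down via Theorems \ref{Theorem 7} and \ref{Theorem 8} for $Z'$) matches the paper's strategy, but there is a genuine gap at precisely the point you flag as ``the main point requiring care'': you never settle which surfaces the two symplectic sums are taken along, and both of your candidate answers fail. Gluing $X$ and $Y$ to $\Sigma_2\times\Sigma_h$ along parallel copies $\Sigma_2\times\{x\}$, $\Sigma_2\times\{x'\}$ is impossible, since Gompf's sum identifies the two surfaces by a diffeomorphism and the hypothesis only provides square-zero \emph{tori} $T_X$, $T_Y$; and your fallback of gluing along ``tori $\{pt\}\times\Sigma_1$-type fibers'' refers to submanifolds that $\Sigma_2\times\Sigma_h$ does not possess (its fibers are $\Sigma_2$ and $\Sigma_h$; the square-zero tori it does contain are the homologically essential Lagrangian tori $a_i\times c_j$, etc., of Propositions \ref{Proposition 10} and \ref{Proposition 11}). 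The paper's resolution, in the case $h=2$, is to perturb the form so that $T_1, T_4, T_5, T_6, T_7, T_8$ become symplectic \cite[Lemma 1.6]{[Go]} and to form $Z := X \#_{T_X = T_1} \Sigma_2\times\Sigma_2 \#_{T_4 = T_Y} Y$ along the two Lagrangian-made-symplectic tori $T_1 = a_1\times c_1$ and $T_4 = a_2\times c_2$. This choice is what powers the rest of the argument: because $\pi_1(X - T_X) = \pi_1(Y - T_Y) = 1$, van Kampen gives $a_1 = c_1 = a_2 = c_2 = 1$ in $\pi_1(Z)$, and the surviving generators $b_1, d_1, b_2, d_2$ are then killed by the simultaneous $(p,q,0)$-surgeries on $T_5, T_6, T_7, T_8$ (as in the construction of $X(4)$ in Section \ref{Section 4.1}), which by Theorem \ref{Theorem 6} also produce the generalized complex structure, with four type change loci.

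Your identification of the diffeomorphism type inherits the same unresolved calibration. With the gluing along $T_1$ and $T_4$, the splitting $Z_{X,Y,3} = X \# 3(S^2\times S^2) \# Y$ is read off from the surviving submanifolds: the surgeries along $b_1$ and $b_2$ turn $b_1\times d_1$, $b_2\times d_1$, and the genus-two surface $\{x\}\times\Sigma_2$ into embedded spheres of square zero, each a factor of an $S^2\times S^2$ summand --- three of them when $h=2$, i.e.\ $2h-1$. Your Euler-characteristic count is arithmetically consistent with this, but it is only a consistency check; it cannot by itself certify that the sphere pairs split off as connected summands. Since every downstream step of your proposal --- the van Kampen computation, the inventory of tori still available for surgery, and the summand count --- depends on which surfaces one glues along, deferring that choice is not a detail left to the reader but the missing core of the proof.
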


The following proof was inspired by an argument due to Z. Szab\'o given in \cite{[SZs]}. 

\begin{proof} Following Proposition \ref{Proposition 10}, perturb the symplectic sum \cite[Lemma 1.6]{[Go]} so that the tori $\{T_1, T_4, T_5, T_6, T_7, T_8\}$ become symplectic. Build the symplectic sum \cite[Theorem 1.3]{[Go]}
\begin{center}
$Z:= X \#_{T_X = T_1} \Sigma_2\times \Sigma_2 \#_{T_4 = T_Y} Y$.
\end{center}

Notice that in $\pi_1(Z)$, we have $a_1 = c_1 = a_2 = c_2 = 1$. Simultaneously perform $(0, 1, 0)$-surgeries on the tori $T_5, T_6, T_7$, and $T_8$. Denote the resulting generalized complex 4-manifold by $Z_{X, Y, 3}$ (Theorem \ref{Theorem 6}). This manifold contains four type change loci. 

Let us take  a look at the surviving submanifolds of $Z_{X, Y, 3}$. The torus surgeries performed along $b_1$ and $b_2$ turn the tori $b_1\times d_1, b_2\times d_1$, and the genus 2-surface $\{x\}\times \Sigma_2$ into imbedded 2-spheres of self-intersection zero. Each of these spheres is a factor of an $S^2\times S^2$-summand. Therefore,
\begin{center}
$Z_{X, Y, 3} = X \# 3(S^2\times S^2) \# Y$.
\end{center}
The generalized complex manifold constructed is neither symplectic \cite{[Ta1]} nor complex \cite{[Ko], [BHPV]}. The generalized complex manifold $Z'_{X, Y, 3} = X \#3(\mathbb{CP}^2 \# \overline{\mathbb{CP}^2}) \# Y$ is obtained by blowing up a nondegenerate point  on  $Z_{X, h}$ (Theorem \ref{Theorem 7}), and then blowing it down (Theorem \ref{Theorem 8}).
\end{proof}

\begin{remark} {\label{Remark 3}} Submanifolds of higher genus. \emph{The reader will notice that the symplectic sums involved in the previous arguments can be modified in order to yield similar results in the case the starting manifold $X$ contains symplectic submanifolds of higher genus}.
\end{remark}

\section{Non-simply connected twisted generalized complex 4-manifolds}{\label{Section 6}}

Given that our principal mechanism of construction is to apply torus surgeries to non-simply connected building blocks, an organic next step is the study of existence of twisted generalized complex structures in the non-simply connected realm.

\subsection{Finitely presented fundamental groups} \label{Section 6.1} In his lovely paper \cite{[Go]}, Gompf proved that every finitely presented group is the fundamental group of a symplectic 4-manifold. Given that the existence of a symplectic structure naturally induces the existence of a generalized complex structure (Example \ref{Example A}), \emph{a posteriori} the fundamental group imposes no restriction on the existence of such a structure.\\

Moreover, Kotschick \cite{[DK]} proved that any finitely presented group occurs as the fundamental group of an almost-complex 4-manifold. It is an interesting question, how much the size of a generalized complex 4-manifold depends on its fundamental group. In this direction we obtain the following result.

\begin{theorem} {\label{Theorem 17}} Let $G$ be a finitely presented group with a presentation consisting of $g$ generators $x_1, \ldots, x_g$, and $r$ relations $w_1, \ldots, w_r$, and let $k$ be a nonnegative integer. There exists a non-symplectic twisted generalized complex 4-manifold $X(G, k)$ with \begin{center}$\pi_1(X(G, k)) \cong G$, $e(X(G, k)) = 4(g + r + 2) + k$, and $\sigma(X(G, k)) = - k$.\end{center}
\end{theorem}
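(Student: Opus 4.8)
The plan is to realize the group $G$ as the fundamental group of a symplectic $4$-manifold built from a known building block, then apply torus surgeries from Theorem \ref{Theorem 6} to produce a twisted generalized complex structure and kill the Seiberg--Witten invariant so that the manifold is non-symplectic. Gompf's construction \cite{[Go]} already realizes every finitely presented group as $\pi_1$ of a symplectic $4$-manifold, but I want explicit control of the characteristic numbers $e$ and $\sigma$ in terms of the presentation data $(g,r)$, together with a supply of disjoint symplectic tori on which to perform the surgeries. So first I would assemble a symplectic building block whose Euler characteristic scales linearly in $g+r$. The natural choice is to take a product such as $\Sigma_2 \times \Sigma_h$ (as in Proposition \ref{Proposition 11}) or, following Gompf, to take copies of $T^2 \times \Sigma$ and form symplectic sums along tori: each generator $x_i$ contributes a handle-like piece and each relation $w_j$ is imposed by a Luttinger-type surgery that sets the corresponding word equal to the identity. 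The bookkeeping of Lemma \ref{Lemma BK} lets me translate each surgery into a relation in $\pi_1$.

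Second, once the symplectic manifold $M$ with $\pi_1(M) \cong G$ and the desired count of pieces is in hand, I would identify a collection of pairwise disjoint homologically essential Lagrangian tori $T_i$, perturb the symplectic form via Gompf's Lemma \cite[Lemma 1.6]{[Go]} to make them symplectic, and perform at least one $(p,q,0)$-torus surgery from Theorem \ref{Theorem 6}. This single surgery already yields a twisted generalized complex structure with a type-change locus, and by Remark \ref{Remark 1} the integrability $3$-form $H$ is a generator of $H^3(X(G,k);\Z)$, so the structure is genuinely twisted in the non-simply connected case. The crucial arithmetic is that each of the $g$ generators and $r$ relations costs a fixed amount of Euler characteristic; arranging the construction so that $e = 4(g+r+2)$ and $\sigma = 0$ at $k=0$ is a matter of counting the summands contributed by the building blocks (each product-of-surfaces piece and each symplectic-sum neck contributes a fixed, computable amount, as in Propositions \ref{Proposition 10} and \ref{Proposition 11}). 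The factor $4$ and the additive constant $2$ are dictated by the specific block used for each generator/relation.

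Third, to obtain the free parameter $k$ with $e = 4(g+r+2)+k$ and $\sigma = -k$, I would blow up $k$ times: each blow-up (Theorem \ref{Theorem 7}) at a nondegenerate complex point adds a $\overline{\mathbb{CP}^2}$ summand, increasing $e$ by $1$ and decreasing $\sigma$ by $1$, exactly matching the target numbers, while preserving both the generalized complex structure and the fundamental group. Finally, for non-symplecticity I would invoke Theorem \ref{Theorem 9}: after the $(p,q,0)$-surgery the manifold contains an embedded sphere or low-genus surface of non-negative self-intersection that violates the adjunction inequality (equivalently, a torus surgery with $r=0$ destroys the symplectic canonical class), forcing the Seiberg--Witten invariants to vanish, whence by Taubes \cite{[Ta1]} the manifold admits no symplectic structure. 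One must check $b_2^+ > 1$ holds for the block so that the adjunction machinery applies; this is arranged by taking $h$ (the genus of the second factor) large enough, or by the blow-ups contributing to $b_2$.

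The main obstacle I anticipate is the fundamental group computation: realizing an \emph{arbitrary} finitely presented $G$ requires that each relation $w_j$ — which may be a long word in the generators — be imposed by a surgery whose effect on $\pi_1$, as computed via Lemma \ref{Lemma BK}, is precisely to kill $w_j$ in the normal closure. This is exactly Gompf's delicate argument, and reproducing it while \emph{simultaneously} keeping the surgery tori disjoint, symplectic (or Lagrangian), and compatible with a single $(p,q,0)$-surgery that supplies the type-change locus is the technically demanding step. I expect the cleanest route is to follow Gompf's model verbatim for the group-theoretic part, reserving one extra Lagrangian torus (disjoint from all the relation-tori) on which to perform the type-changing $(p,q,0)$-surgery, and to defer the detailed van Kampen verification to the cited constructions, as is done for Proposition \ref{Proposition 11}.
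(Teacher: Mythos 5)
Your outline matches the paper's proof at the top level: symplectic building blocks assembled by symplectic sum, $(p,q,0)$-surgeries from Theorem \ref{Theorem 6} to create the twisted structure and the type-change loci, $k$ blow-ups for the parameter $k$ (your arithmetic $e \mapsto e+1$, $\sigma \mapsto \sigma-1$ is correct), and an essential embedded sphere of square zero plus the adjunction inequality (Theorem \ref{Theorem 9}) and Taubes to rule out symplectic structures. However, the central group-theoretic mechanism you propose does not work as stated, and this is a genuine gap. You assert that each relation $w_j$ can be ``imposed by a Luttinger-type surgery that sets the corresponding word equal to the identity.'' By Lemma \ref{Lemma BK}, a torus surgery only imposes a relation of the form $\mu^r m^p l^q = 1$, where $m$ and $l$ are push-offs of curves \emph{lying on an embedded Lagrangian or symplectic torus}; an arbitrary relator $w_j$ is not of this form unless one has already manufactured an embedded torus carrying a curve representing $w_j$, and manufacturing such tori is precisely the hard content, not a routine surgery step. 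Your fallback of following Gompf verbatim does not rescue the statement either: Gompf's construction (and its refinements, which sum with blocks such as elliptic surfaces of Euler characteristic $12$ and signature $-8$) was not designed to produce the advertised values, so it cannot by itself give $e = 4(g+r+2)$ and $\sigma = 0$ before blowing up.

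The paper resolves this differently, and the difference is what makes the numbers come out. It starts from the Baldridge--Kirk block $N = Y \times S^1$, where $Y$ fibers over the circle: this block has $e = \sigma = 0$, and the relations of $G$ are encoded once and for all in $\pi_1(Y)$ via the fibration, \emph{not} imposed by surgeries. What remains is to kill the redundant classes $s, t, \gamma_1, \ldots, \gamma_{g+r}$, which sit on $g+r+1$ symplectic tori $T_i^Y \subset N$. Note also that a single torus surgery on $T_i^Y$ cannot kill both generators $s$ and $\gamma_i$ of $\pi_1(T_i^Y)$, so reserving tori for surgeries, as you suggest, is insufficient; instead the paper forms symplectic sums along $T_0^Y$ with a block $S$ built from $\Sigma_2\times\Sigma_3$ (contributing $e = 8$) and along each $T_i^Y$ with a copy of a block $R$ built from $\Sigma_2\times\Sigma_2$ by five Luttinger surgeries (contributing $e = 4$ each), identifying $s_i \mapsto a_{1,i}$ and $\gamma_i \mapsto c_{1,i}$; the internal relations of the $R_i$ and $S$ blocks then trivialize these classes after the $(1,0,0)$- and $(0,1,0)$-surgeries of Theorem \ref{Theorem 6} performed inside those blocks. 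Since Euler characteristic and signature are additive under symplectic sum along tori and unchanged by torus surgery, this yields $e = 0 + 8 + 4(g+r) = 4(g+r+2)$ and $\sigma = 0$, while the same surgeries create the type-change loci and the square-zero sphere that kills the Seiberg--Witten invariants. Without this mechanism (or an equivalent one), your argument proves only a weaker statement with uncontrolled $e$ and $\sigma$, not Theorem \ref{Theorem 17}.
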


We point out that the manifolds of Theorem \ref{Theorem 17} do not share the homotopy type of a complex surface \cite{[BHPV]}. The remaining part of the section is technical; see also Remark \ref{Remark Y}.\\

In order to prove the theorem, we follow an argument due to Baldridge and Kirk. In \cite{[BK]}, these authors build a symplectic manifold $N$, which serves as a fundamental building block for these constructions since it allows the manipulation on the number of generators and relations on fundamental groups. We proceed to describe it.\\

Begin with a 3-manifold $Y$ that fibers over the circle, and build the 4-manifold\begin{center} $N: = Y\times S^1$.\end{center} Its Euler characteristic and its signature are both zero, and it admits a symplectic structure \cite[p. 856]{[BK]}.\\

The fundamental group of $N$ has the following presentation.
\begin{center}
$\pi_1(N) = \big<H, t \big| R^g_{\ast}(x) = t x t^{-1}, x\in H\big> \times \big<s\big>$,
\end{center}
where the group $H$ has a presentation given by
\begin{center}
$H = \big<x_{1,1}, y_{1,1}, \ldots, x_{g, 1}, y_{g, 1}, x_{1, 2},$ $y_{1, 2}, \ldots, x_{g, 2}, y_{g, 2}, \ldots, x_{1, n}, y_{1, n}, \ldots , x_{g, n}, y_{g, n} \big| \Pi_{i = 1}^n \Pi_{j = 1}^g [x_{j, i}, y_{j, i}] \big>$.\\
\end{center}

Let $G$ be a finitely presented group with $g$ generators and $r$ relations. The fundamental group $\pi_1(Y\times S^1)$ has classes $s, t, \gamma_1, \ldots, \gamma_{g + r}$ so that
\begin{center}
$G \cong \pi_1(Y\times S^1) / N(s, t, \gamma_1, \ldots, \gamma_{g + r})$,
\end{center}

where $N(s, t, \gamma_1, \ldots, \gamma_{g + r})$ is the normal subgroup generated by the aforementioned classes.\\

The symplectic manifold $N$ contains $g + r + 1$ symplectic tori \begin{center}$T_0^Y, T_1^Y, \ldots, T_{g + r}^Y\subset N$\end{center} of self-intersection zero that have two practical features regarding our  fundamental group computations.
\begin{itemize}

\item The generators of $\pi_1(T_0^Y)$ represent $s$ and $t$, and
\item the generators of $\pi_1(T_i^Y)$ represent $s$ and $\gamma_i$.

\end{itemize}

The curve $s$ has the form $\{y\} \times S^1 \subset Y\times S^1$, with $y\in Y$, and $\gamma_i$ has the form $\gamma_i \times \{x\}\subset Y\times \{x\}, x\in S^1$.  The role of the curves $\gamma_i$ is to provide the $r$ relations in a presentation of $G$ \cite[Section 4]{[BK]}; we point out that the number of curves $\gamma_i$ is greater than $r$.\\

Serious technical issues arise during cut-and-paste constructions in the computation of fundamental groups, the choice of basepoint being the culprit. The argument used here to prove Theorem \ref{Theorem 17} builds upon the careful analysis done by J. Yazinski in \cite[Section 4]{[Y]}.\\

Let $R$ be the symplectic manifold obtained from $\Sigma_2\times \Sigma_2$ by applying the following five Luttinger surgeries. Perform two $(1, 0, -1)$-surgeries on the tori $T_5, T_6$, and three $(0, 1, -1)$ -surgeries on the tori $T_3, T_7, T_8$. The manifold $R$ contains the homologically essential Lagrangian tori  $T_1, T_2, T_4 \subset R$. Perturb its symplectic form so that all these tori become symplectic \cite[Lemma 1.6]{[Go]}.\\

Let us pin down a basepoint for the fundamental group computations that are to come. Set the basepoint to be $v\in \partial(\nu(T_1))$, where $\nu(T_1)$ is a tubular neighborhood of $T_1$. The fundamental group $\pi_1(R - T_1)$ is generated by the elements $\{a_1, b_1, \ldots, c_2, d_2, \epsilon_1, \epsilon_2, \ldots, \epsilon_m\}$, where the elements $\{\epsilon_j\}$ are conjugates to the based meridian $\mu_1 = [b_1^{-1}, d^{-1}]$. Let $\beta$ be a path inside $R - T_1$ from the basepoint $(x, y)$ that was chosen to calculate $\pi_1$ of the complement of the tori inside $\Sigma_2\times \Sigma_2$ (cf. \cite{[FPS]} and Proposition \ref{Proposition 10}) to $v$, which is contained in the torus $b_1\times d_1$ and such that it intersects $T_1$ transversally at one point. The paths representing generators of $\pi_1(\Sigma_2\times \Sigma_2, (x, y))$ can be conjugated by $\beta$ in order to obtain paths that are based at $v$. The relations stated in Proposition \ref{Proposition 10} continue to hold in $\pi_1(R - T_1, v)$ under the new choice of basepoint.

\begin{proof} The task at hand is to construct a twisted generalized complex manifold $X(G)$ that is not symplectic, such that $\pi_1(X(G)) = G, e(X(G)) = 4(g + r + 2)$, and $\sigma(X(G)) = 0$. This is done by applying torus surgeries to a symplectic sum along tori composed of  $Y$, a symplectic manifold built from $\Sigma_2\times \Sigma_3$, and $g + r + 1$ copies of $R$, which we will denote by $R_i, i = 1, \ldots, g + r$. The symplectic tori inside each copy are denoted by $T_{1, i}, T_{2, i}, T_{4, i} \subset R_i$. Once $X(G)$ is obtained, applying blow ups to it results in the manifold $X(G, k)$. The details are as follows.

Consider the manifold $\Sigma_2\times \Sigma_3$ equipped with the product symplectic form. A small modification on the fundamental group computations in \cite{[MCH]} and Proposition \ref{Proposition 11}  yields that 
the Lagrangian tori inside $\Sigma_2\times \Sigma_3$, the meridians and the Lagrangian pushoffs of twelve tori are given by\\
\begin{center}
$T_1 := a_1\times c_1, m_1 = a_1, l_1 = c_1, \mu_1 = [b_1^{-1}, d_1^{-1}]$,\\
$T_2 : = a_1\times c_2,  m_2 = a_1, l_2  = c_2, \mu_2 = [b_1^{-1}, d_2^{-1}]$,\\
$T_3 := a_1\times c_3, m_3 = a_1, l_3 = c_3, \mu_3 = [b_1^{-1}, d_3^{-1}]$,
$T_4 := a_2\times c_1, m_4 = a_2, l_4 = c_1, \mu_4 = [b_2^{-1}, d_1^{-1}]$,\\
$T_5 := a_2\times c_2, m_5 = a_2, l_5 = c_2, \mu_5 = [b_2^{-1}, d_2^{-1}]$,\\
$T_6 := a_2\times c_3, m_6 = a_2, l_6 = c_3, \mu_6 = [b_2^{-1}, d_3^{-1}]$,\\
$T_7 := b_1\times c_1, m_7 = b_1, l_7 = d_1 c_1 d_1^{-1}, \mu_7 = [a_1^{-1}, d_1]$,\\
$T_8 := b_2\times c_2, m_8 = b_2, l_8 = d_2 c_2 d_2^{-1}, \mu_8 = [a_2^{-1}, d_2]$,\\
$T_9 := a_2\times d_1, m_9 = b_2 a_2 b_2^{-1}, l_9 = d_1, \mu_9 = [b_2, c_1^{-1}]$,\\
$T_{10} := a_1\times d_2,  m_{10} = b_1 a_1 b_1^{-1}, l_{10} = d_2, \mu_{10} = [b_1, c_2^{-1}]$,\\
$T_{11} := a_1\times d_3, m_{11} = b_1 a_1 b_1^{-1}, l_{11} = d_3, \mu_{11} = [b_1, c_3^{-1}]$, and\\
$T_{12} := a_2\times d_3, m_{12} = b_2 a_2 b_2^{-1}, l_{12} = d_3, \mu_{12} = [b_2, c_3^{-1}]$.
\end{center}

Construct a symplectic manifold $S$ by applying Luttinger surgeries and perturbing the symplectic form as follows. Perform $(1, 0, -1)$-surgery on the tori $T_1, T_2, T_4, T_5, T_7, T_8$, and on the tori $T_9, T_{10}, T_{11}$ perform $(0, 1, -1)$-surgeries.  By \cite{[ADK]}, the resulting manifold is symplectic. Now, perturb the symplectic form so that the tori $T_3, T_{12} \subset S$ become symplectic \cite[Lemma 1.6]{[Go]}. Notice that $H_1(S; \Z) \cong \Z$ is generated by $c_3$.

Regarding $\pi_1(S - T_{12})$, set the basepoint to be $v_{12} \in \partial (\nu(T_{12}))$. Let $\beta_{12}$ be a path in $S - \nu(T_{12})$ from the chosen basepoint $(x_2, y_3)$ of $\pi_1(\Sigma_2\times \Sigma_3, (x_2, y_3))$ used to compute the fundamental group of the complement of the tori to $v_{12}$, which is contained in the torus $b_2 \times c_3$ and intersects $T_{12}$ transversally in one point. Conjugating the paths that represent generators of $\pi_1(\Sigma_2\times \Sigma_3, (x_2, y_3))$ by $\beta_{12}$ yields paths based at $v_{12}$. The fundamental group $\pi_1(S - T_{12})$ is generated by the elements $\{a_1, b_1, \ldots, c_3, d_3, \epsilon_1^S, \ldots, \epsilon_m^S\}$, where the elements $\{\epsilon_j^S\}$ are conjugates to the based meridian $\mu_{12}$.

Following \cite{[Go]} and \cite{[BK]}, we start by building a symplectic manifold that provides us with the generators of $G$ (plus some generators that we will be getting rid of at a later stage with the usage of torus surgeries), and that contains enough submanifolds to introduce the relations of $G$ in a later step. For these purposes, build the symplectic sum \cite[Theorem 1.3]{[Go]}

\begin{center}
$Z:= S \#_{T_{12} = T_0^Y} Y$
\end{center}

using the diffeomorphism $\phi: T_{12} \rightarrow T_0^Y$ that induces the identifications $a_2\mapsto s$, $d_3\mapsto t$ on fundamental groups. The meridian $\mu_0^Y$ is identified with the based meridian $\mu_{12} = [b_2, c_3^{-1}]$. Let $z$ be the basepoint for the block $Z$, and let $\eta$ be a path that takes $z$ to $\phi(v_{12})$ with $v_{12} \in \partial (\nu(T_{12}))$. Conjugation by $\eta$ provides us with paths that represent elements in $\pi_1$. We abuse notation, and we keep calling the conjugated elements with the same symbols. 

Using Seifert-van Kampen's Theorem, the fundamental group is given by
\begin{center}
$\pi_1(Z) =  \frac{\pi_1(Y - \nu(T_0^Y)) \ast \pi_1(S - \nu(T_{12}))}{<a_2 s^{-1},  d_3 t^{-1},  \mu_0^Y (\mu_{12})^{-1}>}$.
\end{center}

Here, $\nu(T_0^Y), \nu(T_{12})$ denote tubular neighborhoods of the corresponding tori; in particular, these manifolds intersect in an open neighborhood of $T^3$.

Moreover, the tori $T_1^Y, \ldots, T^Y_{r + g}$ are contained in $Z$. The next step is to use these tori to perform $r + g$ symplectic sums with the purpose of  introducing the $r$ relations in the presentation of $G$. In the process, more relations are introduced into the group presentation. We will get rid of any extra relation by using torus surgeries at a latter step. Take $i = 1, \cdots, r + g$ copies of $R$, and call each copy $R_i$. We start by choosing based curves in $R_i$ that are homotopic to the generators $a_{1, i}$, and $c_{1, i}$ for all $i$. By Section \ref{Section 4.1}, the based meridian $\mu_{1,i}$ is taken to be the commutator $[b_{1, i}^{-1}, d_{1, i}^{-1}]$. These push offs are contained in the boundary of the tubular neighborhood of the respective torus inside $R_i$.

Minding the choice of base point for the application of Seifert-van Kampen's Theorem, we will follow the discussion that precedes the proof for each copy of $R$. So, we have basepoints $v_i \in \partial(\nu(T_{1, i}))$ and conjugating paths $\beta_i$ .

Let $Z':= Z - (\nu(T_1^Y)\cup \cdots \cup \nu(T_{r + g}^Y))$, where $\nu(T_i^Y)$ denotes a tubular neighborhood of the symplectic torus $T_i^Y\subset Z$. Build the symplectic sums of $Z$ and $R_i$ along $T_{1, i}$ using the diffeomorphism $\phi_i: T_{1, i} \rightarrow T_i^Y$ that identify the generators
\begin{center}
$a_{1, i}\mapsto s_i$\\
$c_{1, i}\mapsto \gamma_i$
\end{center}

to build a symplectic manifold $\hat{X}$. The tori $T_{2, i}, T_{4, i}, T_3'$ are contained in $\hat{X}$. Simultaneously perform $(0, 1, 0)$-, $(1, 0, 0)$-, and $(0, 1, 0)$-surgery respectively on these tori to set $ c_{2, i} = a_{2, i} = c_3 = 1$. Denote the resulting twisted generalized complex manifold by $X(G)$. We claim that the fundamental group of $X(G)$ is given by the finitely presented group $G$ of our hypothesis.

We need to make sure that the usage of Seifert-van Kampen's Theorem is done carefully. There must be a common basepoint that is being shared by the open neighborhoods of the pieces being glued together. Set $z_0$ to be the basepoint in $Z'$. Regarding the basepoints $v_i \in \partial (\nu(T_{1, i}))$ for the pieces $R_i - T_{1, i}$, fix paths $\eta_i$ that take $z_0$ to $\phi(v_{12})$. Take $\overline{R_i - \nu(T_{1, i})}$ to be the union of $R_i - \nu(T_{1, i})$ and a neighborhood of the path $\eta_i$. We continue to abuse notation, and denote an element of $\pi_1(\overline{R_i - \nu(T_{1, i}})$ with the same symbol for the related element in $\pi_1(R_i - \nu(T_{1, i}))$, since conjugation with the path $\eta_i$ correlates the elements and the relations continue to hold.

By Seifert-van Kampen's Theorem we have

\begin{center}
$\pi_1(X(G)) \cong \frac{\pi_1(Z') \ast \pi_1(\overline{R_1 - \nu(T_{1, 1})}) \ast \cdots \ast \pi_1(\overline{R_{r + g} - \nu(T_{1, r + g})})}{N}$,\\
\end{center}

where $N$ is the normal subgroup of
 $\pi_1(Y) \ast \pi_1(S) \ast \pi_1(R_1 - T_{1, 1}) \ast \cdots \ast \pi_1(R_{r + g} - T_{1, r + g})$ 
generated by $a_{1, i}\overline{s}_i^{-1}, c_{1, i} \overline{\gamma}_i^{-1}$,the meridians $\mu'_{12} {\mu_{0}^Y}^{-1}, \mu^Y_i {\mu_{1, i}}^{-1}$ and their conjugates, and $c_{2, i} = a_{2, i} = c_3 = 1$.  The element $\bar{\gamma}_i$ is a based loop in $Z'$ that is freely homotopic to a push off of the loop $\gamma_i \times \{x\}\subset \gamma_i \times S^1$ to the boundary of a tubular neighborhood of the torus $T_i = \gamma_i \times S^1$. The element $\mu_i^Y$ is a based meridian corresponding o the torus $T_i \subset Z'$. Given that the curves $\bar{s}_i$ and $s$ are of the form $\{x\}\times S^1$, the elements $\bar{s}_i$ are based curves in $Z'$ that are homotopic to $s$ on the boundary of a tubular neighborhood of $T_i = \gamma_i \times S^1$ (cf. \cite[Proof of Lemma 4.3]{[Y]}).




We need to see that the elements $a_1, b_1, a_2, b_2, c_1$, $d_1, c_2, d_2, d_3^{-1}t, b_{1, i}, b_{2, i}, d_{1, i}, d_{2, i}$ (and the conjugates of the meridians $\epsilon_{1, i}, \ldots, \epsilon_{n, i}, \epsilon_1^S, \ldots, \epsilon_m^S$) for $i = 1, \dots, g + r$ are trivial in $\pi_1(X(G))$. The $(1, 0, 0)$- and $(0, 1, 0)$-torus surgeries kill the generators $b_{1, i}, b_{2, i}, d_{1, i}, d_{2, i}$. This can be seen by plugging in any of the identities $c_{2, i} = 1 = a_{2, i}$ in the presentation of $\pi_1(R_i)$ (see Proposition \ref{Proposition 10}). Moreover, the triviality of the three generators $c_{2, i} = a_{2, i} = c_3 = 1$ implies that the meridians $\mu_{12}, \mu_0^Y, \mu^Y_i, \mu_{1, i}$ and their conjugates are trivial in $\pi_1(X(G))$. Using the identity $c_3 = 1$ in the relations in $\pi_1(S)$, we see that $a_1 = b_1 = a_2 = b_2 = c_1  = d_1 = c_2 = d_2 = d_3 = 1$, and $t = 1$ due to the identifications of the generators of the tori during the construction of the symplectic sum $Z$. Moreover, since $\mu_i^Y = 1$ for $i = 1, \ldots, r + g$, their conjugates are trivial. This implies that $\bar{\gamma}_i$ is conjugate to $\gamma_i$ in $\pi_1(X(G))$. By the choice of $\gamma_i$ \cite{[BK]}, we have $\pi_1(X(G)) \cong G$.\\

To argue that the manifold $X(G)$ does not admit a symplectic structure, we proceed as follows. The torus $T_6'$ is contained in $X(G)$, since it was disjoint from all the tori involved in the surgeries. The $(0, 1, 0)$-surgery that was applied on $T_3'$ turns $T_6'$ into an embedded 2-sphere of self-intersection zero. By the Adjunction inequality (Theorem \ref{Theorem 9}), the manifold $X(G)$ has trivial Seiberg-Witten invariants. Taubes' work \cite{[Ta1]} implies that $X(G)$ does not admit a symplectic structure, as claimed.\\

The computations of the characteristic numbers of $X(G)$ are straight-forward.
\end{proof}

\begin{remark}{\label{Remark Y}} Another construction of a manifold $X(G)$. \emph{A twisted generalized complex 4-manifold that is neither symplectic, nor complex with arbitrary finitely presented fundamental group, Euler characteristic $10 + 4(g + r + 1)$ and signature $-1$ can be produced as follows. The symplectic manifold in the main result of \cite{[Y]} contains a symplectic 2-torus whose meridian in the complement is trivial. Use this symplectic 2-torus to form a symplectic sum \cite{[Go]} of the manifold $X$ in \cite[Theorem 1.1]{[Y]} with $\Sigma_2\times \Sigma_2$, and then apply accordingly six surgeries of Theorem \ref{Theorem 6} cf. Section \ref{Section 4.1}. Such a twisted generalized complex structure contains six type change locus cf. Remark \ref{Remark M}.}.
\end{remark}

\subsection{Abelian groups} \label{Section 6.2} It is of our interest to provide explicit constructions, which we hope will be useful in improving our understanding of twisted generalized complex structures.  We begin this section with two examples that expose different twisted generalized complex structures on the same 4-manifold.

\begin{example} {\label{Example 18}} \emph{Twisted generalized complex structure on $S^3\times S^1$ with a single type change locus \emph{(cf. \cite[Example 6.38]{[G1]}, \cite[Example 4.1]{[CG]})}}. Consider $T^2\times S^2$ endowed with the product symplectic form. A $(1, 0, 0)$-torus surgery on the symplectic submanifold $T^2 \times \{s\}$ for $s\in S^2$ along one of the loops carrying a generator of $\pi_1(T^2\times S^2)$ results in $S^3\times S^1$. The diffeomorphism is established as follows. The handlebody of $T^2\times S^2$ is given at the top of Figure 1. We are using the 1-handle notation introduced by Akbulut in \cite{[A0]}. The effect that a $(1, 0, 0)$-torus surgery has on the handlebody decomposition of $T^2 \times D^2$ is to interchange a 1-handle (dotted circle) and a 0-framed 2-handle \cite[Figure 8.25, p.316]{[GS]}, \cite[Figure 17]{[AY]}. Thus, performing a $(0, 0, 1)$-torus surgery on $T^2\times \{pt\}$ results in the second diagram of Figure 1. The second and third rows show the isotopies and handle cancellations that establish a diffeomorphism with $S^3\times S^1$. By Theorem \ref{Theorem 6}, this primary Hopf surface has a twisted generalized complex structure with one type change locus.

\begin{figure}{\label{Figure 1}}
\begin{center}
\includegraphics[scale=0.5]{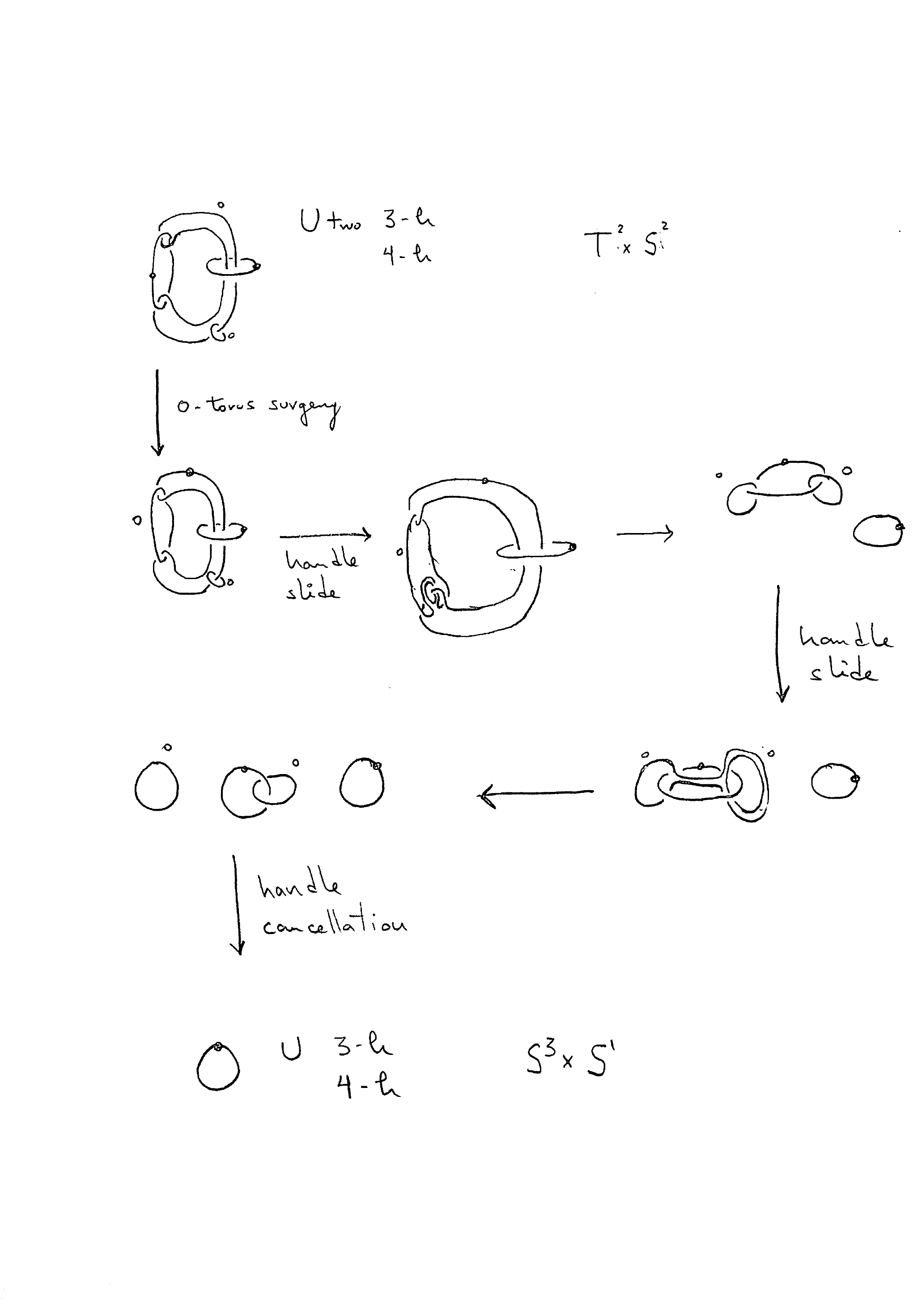}
\caption{The result of performing a $(1, 0, 0)$-surgery to $T^2\times S^2$ on the 2-torus $T^2\times \{pt\}$ is $S^3\times S^1$}
\end{center}
\end{figure}

\end{example}

\begin{example} {\label{Example 19}}\emph{Twisted generalized complex structure on blow-ups of $L(p, 1)\times S^1$ with one, two or three type change loci}. To build a twisted generalized complex structure on this secondary Hopf surface, start with the  4-torus $T^4 = T^2\times T^2$ endowed with the product symplectic form. Denote by $x, y, a, b$  both the generators of $\pi_1(T^2\times T^2) = \Z x \oplus \Z y \oplus \Z a \oplus \Z b$ and the corresponding loops as well. Under this notation, the tori \begin{center} $T_1:= x \times a, T_2:= y\times a$ \end{center}and their respective dual tori $y\times b, x\times b$ are Lagrangian, and the torus $T_3:= a\times b$ and its dual $x\times y$ are symplectic.\\

According to Baldridge and Kirk \cite[Section 2]{[BKS]} and \cite[Theorem 1]{[BK3]}, the fundamental group of $T^4 - (T_1 \cup T_2)$ is generated by the loops $x, y, a, b$ and the relations $[x, a] = [y, a] = 1$ hold. The meridians of the tori and the two Lagrangian push offs of their generators are given by the following formulae:\\

\begin{center}
$m_1 = x, l_1 = a, \mu_1 = [b^{-1}, y^{-1}]$ and\end{center}
\begin{center}
$m_2 = y, l_2 = bab^{-1}, \mu_2 = [x^{-1}, b]$.
\end{center}

Apply a $(p, 0, -1)$-torus surgery on $T_1$. This Luttinger surgery introduces the relation $x^p = [b^{-1}, y^{-1}]$. Now, do a  $(1, 0, 1)$-surgery on $T_2$ to introduce the relation $y = [x^{-1}, b]$. Last, perform a $(0, 1, 0)$-torus surgery on $T_3$ along $b$, and call the resulting manifold $X_p(1)$. The last surgery sets $b = 1$, which implies $y = [x^{-1}, 1] = 1$, and $x^p = 1$. Since the elements $x$ and $a$ commute, we have\\

\begin{center} $\pi_1(X_p(1)) = < x, a: x^p = 1, [x, a] = 1 > \cong  \Z/p\Z \oplus \Z$.\end{center}

Theorem \ref{Theorem 6} implies that $X_p(1)$ admits a twisted generalized complex structure with one single type change locus. If we simultaneously perform $(1, 0, 0)$-surgery on a symplectic $T_2$, and $(0, 1, 0)$-torus surgery on $T_3$, a twisted generalized complex structure $(X_p(2), H, \mathcal{J})$ with two type change loci. Performing simultaneously the surgery of Theorem \ref{Theorem 6} on three symplectic $T_1, T_2, T_3$ yields a third twisted generalized complex structure $(X_p(3), H, \mathcal{J})$ with three type change loci.\\

Set $X_p:= X_p(i)$ for $i\in \{1, 2, 3\}$. Let us check that there exists a diffeomorphism $X_p \rightarrow L(p, 1)\times S^1$, by using the fact that a diffeomorphism between two 3-manifolds extends to a diffeomorphism between the 4-manifolds obtained by taking the product of $S^1$ with the diffeomorphic 3-manifolds. We are grateful to Ronald J. Stern, having learned the following argument from him. \\

Think of the 4-torus as $T^4 = T^3 \times S^1 = (x \times y \times b) \times a$. The torus surgeries performed to obtained $X_p$ can be seen as Dehn surgeries surgeries on $D^2\times T^2 = (D^2\times S^1) \times S^1$ along the curves $x, y, b$ in $T^3$ (see Section \ref{Section 3.1}). In particular, we have that the twisted generalized complex manifold $X_p = \widetilde{T^3}\times S^1$, where $\widetilde{T^3}$ is the 3-manifold obtained from applying the three Dehn surgeries on the 3-torus.\\

The diffeomorphism between $\widetilde{T^3}$ and $L(p, 1)$ can be seen through the analysis of the effect that the Dehn surgeries have on the handlebody of $T^3$. The 3-torus is obtained by 0-surgery on the Borromean rings \cite[Figure 5.25, p. 159]{[GS]}, and the Lens spaces are obtained by performing a $-p$-surgery on the unknot \cite[Figure 5.24, p.158]{[GS]}. Thus, $X_p$ is diffeomorphic to $L(p, 1)\times S^1$.\\

By blowing up k non-degenerate points and iterating the usage of Theorem \ref{Theorem 7}, one concludes that \begin{center} $L(p, 1)\times S^1 \# k  \overline{\mathbb{CP}^2}$\end{center} admits three twisted generalized complex structures with one, two, and three type change loci.
\end{example}

\begin{remark} {\label{Remark 4}} New twisted generalized complex structures on blow-ups of $S^3\times S^1$ and $T^2\times S^2$. \emph{Since there are diffeomorphisms $L(1, 1) = S^3$ and $L(0, 1) = S^1\times S^2$, Example \ref{Example 19} covers the existence of twisted generalized complex structures on blow-ups of $S^3\times S^1$ and on blow ups of $T^2\times S^2$ as well. In particular, the twisted generalized complex structure on $S^3\times S^1$ has two type change loci. The twisted generalized complex structure on the Hopf surface displayed in Example \ref{Example 18} has a single type change locus. Furthermore, it was proven above that there are twisted generalized complex structures on $T^2\times S^2$ with one or two type change loci}.\\
\end{remark}

The main result of this section is the following theorem.

\begin{theorem} {\label{Theorem 20}}The manifolds
\begin{itemize}
\item $\hat{m}(S^2\times S^2) \# S^3\times S^1$,  and
\item $\hat{m} \mathbb{CP}^2 \# \hat{n}\overline{\mathbb{CP}^2} \# S^3\times S^1$
\end{itemize}
admit a twisted generalized complex structure if and only if they admit an almost complex structure.
\end{theorem}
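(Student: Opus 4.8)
The plan is to follow the now-familiar two-step recipe of this paper: first establish the \emph{necessity} direction, then settle \emph{sufficiency} by exhibiting explicit manifolds carrying a twisted generalized complex structure whose diffeomorphism type matches the target connected sums. The necessity direction is immediate from Gualtieri's work \cite{[G]}: a twisted generalized complex structure requires an almost complex structure, so if the listed manifolds carry the former they certainly carry the latter. Thus the whole content lies in the converse.

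For sufficiency, the natural approach is to reuse the building block from Example \ref{Example 18}, where a single $(1,0,0)$-torus surgery on $T^2 \times S^2$ produces $S^3 \times S^1$ carrying a twisted generalized complex structure with one type change locus. The idea is to combine this $S^3 \times S^1$ factor with the spin constructions of Section \ref{Section 4.1} and Section \ref{Section 4.2}. Concretely, I would take the symplectic product $\Sigma_2 \times \Sigma_g$ (or $\Sigma_2 \times \Sigma_2$ for the smallest cases), perturb the symplectic form so that the relevant homologically essential Lagrangian tori become symplectic \cite[Lemma 1.6]{[Go]}, and then perform the $(p,q,0)$-torus surgeries of Theorem \ref{Theorem 6} on enough tori to reduce the embedded genus-two surfaces to spheres spanning $S^2 \times S^2$ summands, exactly as in the proof of Theorem \ref{Theorem 12}. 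The distinction here is that one of the torus surgeries is \emph{not} performed (or is replaced by a trivial one), so that an infinite cyclic factor survives in the fundamental group; this is precisely the mechanism already sketched in Proposition \ref{Proposition 13} for the $\pi_1 = \Z$ case. By Lemma \ref{Lemma BK} the fundamental group of the resulting manifold $\hat{X}$ is $\Z$, and by Theorem \ref{Theorem 6} it admits a twisted generalized complex structure; the surgical analysis of the surviving submanifolds shows $\hat{X} \cong \hat{m}(S^2 \times S^2) \# S^3 \times S^1$ for the appropriate $\hat{m}$. The non-spin case is obtained by iterating the blow-up and blow-down operations (Theorems \ref{Theorem 7} and \ref{Theorem 8}) to convert each $S^2 \times S^2$ summand into a $\mathbb{CP}^2 \# \overline{\mathbb{CP}^2}$ summand, turning $\hat{m}(S^2 \times S^2)$ into $\hat{m}\mathbb{CP}^2 \# \hat{n}\overline{\mathbb{CP}^2}$, just as in Proposition \ref{Proposition 14}.

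The homeomorphism (and hence diffeomorphism, after identifying summands) classification that pins down the precise connected sum will rest, in the infinite cyclic fundamental group case, on the criterion of Hambleton and Teichner \cite[Corollary 3]{[HT]}, already invoked in Proposition \ref{Proposition 13}. Since $\pi_1 = \Z$, Poincar\'e duality forces $H^3(\hat{X};\Z) \cong \Z$, so the integrating $3$-form $H$ is a generator of $H^3$ and the structure is genuinely twisted, consistent with Remark \ref{Remark 1}. The parity of the intersection form (even in the spin case, odd after blowing up) together with the Euler characteristic and signature, which are invariant under torus surgery, determine the homeomorphism type uniquely.

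The main obstacle I anticipate is the careful fundamental group bookkeeping needed to guarantee that exactly one infinite cyclic generator survives while all other generators are killed. The surgery relations of Proposition \ref{Proposition 11} must be applied so that the commutator relations collapse the genus-two generators to the identity, leaving only the $S^1$ factor contributed by the $S^3 \times S^1$ piece uncancelled; this requires choosing surgery coefficients and the omitted surgery consistently, and verifying via the presentation that no unwanted relation accidentally trivializes the desired $\Z$ factor. A secondary technical point is confirming that the surviving submanifolds genuinely split off as $S^2 \times S^2$ (respectively $S^3 \times S^1$) summands rather than forming a twisted or irreducible piece, which again reduces to the genus-reduction argument from the proof of Theorem \ref{Theorem 12} applied summand by summand.
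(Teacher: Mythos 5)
Your proposal follows essentially the same route as the paper's proof: necessity from \cite{[G]}, the spin case $\hat{m}(S^2\times S^2)\# S^3\times S^1$ obtained exactly as in Proposition \ref{Proposition 13} by omitting one torus surgery in the $\Sigma_2\times\Sigma_g$ construction of Theorem \ref{Theorem 12} (with reducibility checked via the same surviving-submanifold argument, and the Hambleton--Teichner criterion for the $\pi_1=\Z$ homeomorphism type), and the non-spin case by blow-ups and blow-downs as in Proposition \ref{Proposition 14}. The only cosmetic difference is your opening appeal to Example \ref{Example 18}, which your construction never actually uses.
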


The production of these manifolds involve torus surgeries on the product $\Sigma_2\times \Sigma_g$ of a genus two surface and a genus $g$ surface, as it was done for Theorem \ref{Theorem 12} in Section \ref{Section 4.2}. In particular $\hat{m} = m + 1 = 2g - 2$. For the number of type change loci see Remark \ref{Remark M} and Section \ref{Section 4.1}.

\begin{proof} The necessity of the existence of an almost-complex structure is clear \cite{[G]}. The construction of the manifolds claimed by the first item in the statement of the theorem was carried out in Proposition \ref{Proposition 13}. To see that these manifolds are reducible, one proceeds as in the proof of Theorem \ref{Theorem 12}.
The existence of the manifolds of the second claim in the statement  follows by applying blow ups and blow downs (Theorem \ref{Theorem 7} and Theorem \ref{Theorem 8}) to the manifolds of the first item (compare with the proof of Proposition \ref{Proposition 14}).
\end{proof}

The symmetric product of a genus $g$ surface $Sym^2(\Sigma_g)$  is a surface of general type obtained by taking the quotient of the product of two surfaces of genus $g$ $\Sigma_g\times \Sigma_g$ by the action of the involution $\Sigma_g\times \Sigma_g \rightarrow \Sigma_g\times \Sigma_g$ defined by $(x, y) \mapsto (y, x)$. 
By applying the procedure described in the previous sections to this K\"ahler manifold one obtains the following proposition.

\begin{proposition} {\label{Proposition 21}} Let $k_1, k_2, k_3, k_4$ be a nonnegative integers. There are twisted generalized complex non-symplectic, non-complex 4-manifolds with for the following homeomorphism classes

\begin{itemize}
\item $\pi_1 = 1:  (g^2 - 3g + 1) \mathbb{CP}^2 \# k_1 \overline{\mathbb{CP}^2}$,

\item $\pi_1 = \Z/p\Z:  (g^2 - 3g + 1) \mathbb{CP}^2 \# k_2 \overline{\mathbb{CP}^2}\# \widetilde{L(p, 1)\times S^1}$

\item $\pi_1 = \Z/p\Z \oplus \Z/q\Z:  (g^2 - 3g + 1) \mathbb{CP}^2 \# k_3 \overline{\mathbb{CP}^2}\# \widehat{L(p, 1)\times S^1}$

\item $\pi_1 = \Z:  (g^2 - 3g + 2) \mathbb{CP}^2 \# k_4 \overline{\mathbb{CP}^2} \# S^3\times S^1$

\end{itemize}
\end{proposition}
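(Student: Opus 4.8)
The plan is to run the surgery scheme of Sections \ref{Section 4.2} and \ref{Section 6.2} with the K\"ahler surface $Sym^2(\Sigma_g)$ in place of the product $\Sigma_2\times\Sigma_g$. First I would record the topology of the building block: being K\"ahler it is symplectic, its fundamental group is $\pi_1(Sym^2(\Sigma_g))\cong H_1(\Sigma_g;\Z)\cong\Z^{2g}$, and a Hodge-number computation yields $e(Sym^2(\Sigma_g)) = 2g^2 - 5g + 3$ and $\sigma(Sym^2(\Sigma_g)) = 1 - g$. The geometric input I need, in analogy with Propositions \ref{Proposition 10} and \ref{Proposition 11}, is a configuration of $2g$ pairwise disjoint homologically essential Lagrangian tori whose Lagrangian push-offs carry the $2g$ standard generators of $\pi_1$, together with an explicit list of meridians. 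Such tori are produced from pairs of disjoint embedded circles on $\Sigma_g$ via the unordered-pair construction in $Sym^2(\Sigma_g)$, and the meridian/push-off bookkeeping is the analogue of the calculation quoted from \cite{[FPS], [AP]}.

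With this data in hand I would, following Gompf's perturbation \cite[Lemma 1.6]{[Go]}, make the relevant tori symplectic and then apply Theorem \ref{Theorem 6} ($(p,q,0)$-surgeries), interspersed with auxiliary Luttinger surgeries as in Section \ref{Section 6.1}, so as to simultaneously endow the result with a generalized complex structure and control $\pi_1$ via Lemma \ref{Lemma BK}. For the simply connected item I would use $2g$ nontrivial surgeries to kill every generator; for $\pi_1=\Z$ I would omit one surgery exactly as in Proposition \ref{Proposition 13}, leaving an $S^3\times S^1$ summand; for $\pi_1 = \Z/p\Z$ and $\pi_1 = \Z/p\Z\oplus\Z/q\Z$ I would replace one, respectively two, coefficients $(1,0,0)$ by $(p,0,0)$, producing the connected summands $\widetilde{L(p,1)\times S^1}$ and $\widehat{L(p,1)\times S^1}$. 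Because essential nontrivial torus surgeries preserve $e$ and $\sigma$ while dropping $b_1$ by one and $b_2$ by two, the resulting closed manifolds have the Betti numbers demanded by the statement; in the simply connected case, for instance, $b_2^+ = g^2 - 3g + 1$ and $b_2^- = g^2 - 2g$.

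To identify the homeomorphism type I would not attempt a geometric connected-sum decomposition; instead I would feed $(\pi_1, e, \sigma)$ together with the (non-)spin/$w_2$ data into the relevant classification theorems: Freedman's theorem \cite{[F]} for $\pi_1 = 1$ (the indefinite odd intersection form is standard), Hambleton--Teichner \cite[Corollary 3]{[HT]} for $\pi_1 = \Z$, and Hambleton--Kreck \cite[Theorems B and C]{[HK]} for the finite cyclic and $\Z/p\Z\oplus\Z/q\Z$ cases, precisely as in Proposition \ref{Proposition 13}. Here one must check that the surgered manifold is non-spin, that is, that its intersection form is odd, which is what forces the projective-plane connected sums rather than copies of $S^2\times S^2$; the $3$-form $H$ is the generator of $H^3$ as in Remark \ref{Remark 1}. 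Varying the $k_i$ and passing between $S^2\times S^2$ and $\mathbb{CP}^2\#\overline{\mathbb{CP}^2}$ summands is then handled by iterating the blow up and blow down Theorems \ref{Theorem 7} and \ref{Theorem 8}.

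For the two remaining properties I would argue as in Theorem \ref{Theorem 12}. One of the $(p,q,0)$-surgeries reduces the genus of an embedded symplectic surface of nontorsion homology class to a $2$-sphere of non-negative self-intersection; provided $b_2^+ > 1$ (which holds once $g$ is large enough, e.g.\ $g\geq 4$), the adjunction inequality (Theorem \ref{Theorem 9}, the $g=0$ case) forces the Seiberg--Witten invariants to vanish, and Taubes' theorem \cite{[Ta1]} rules out a symplectic structure. Non-existence of a complex structure follows from Kodaira's classification \cite{[Ko], [BHPV]}: a complex surface with $b_2^+ \geq 2$ would be K\"ahler with non-trivial Seiberg--Witten invariants, contradicting the vanishing just established, while the odd first Betti number handles the $\pi_1 = \Z$ case. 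The main obstacle I anticipate is the very first step, namely pinning down the explicit Lagrangian tori in $Sym^2(\Sigma_g)$ and their precise meridian relations and then carrying out the basepoint-sensitive Seifert--van Kampen computation through the surgeries, since, unlike the product $\Sigma_2\times\Sigma_g$, the symmetric product carries no obvious product framing for these tori.
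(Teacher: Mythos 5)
Your proposal is correct and takes essentially the same route as the paper: the paper's entire proof of Proposition \ref{Proposition 21} is the single remark that one applies ``the procedure described in the previous sections'' to the K\"ahler surface $Sym^2(\Sigma_g)$, which is exactly your plan (Lagrangian tori carrying the $2g$ generators of $\pi_1 \cong \Z^{2g}$, Gompf perturbation, surgeries via Theorem \ref{Theorem 6} and Lemma \ref{Lemma BK}, homeomorphism type via Freedman and Hambleton--Teichner/Hambleton--Kreck, non-existence of symplectic and complex structures via the adjunction inequality, Taubes, and Kodaira, and blow-ups for the $k_i$). Your characteristic-number computation $e(Sym^2(\Sigma_g)) = 2g^2 - 5g + 3$, $\sigma = 1 - g$, hence $b_2^+ = g^2 - 3g + 1$ and $b_2^- = g^2 - 2g$ after killing $b_1 = 2g$, matches the statement, and your flagged caveats (explicit meridian bookkeeping for the tori in $Sym^2(\Sigma_g)$, and the need for $b_2^+ > 1$, i.e.\ $g \geq 4$, in the adjunction argument) are details the paper itself leaves unaddressed.
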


We finish the section by mentioning that it is straight-forward to obtain similar existence results of twisted generalized complex manifolds that do not admit a symplectic nor a complex structure, and whose fundamental group is among the following choices

\begin{center}
$\pi_1 = \Z/p_1\Z \oplus \Z/p_2\Z \oplus \cdots \oplus \Z/p_{2g - 1}\Z$.
\end{center}

\subsection{Free and surface groups} \label{Section 6.3} For what follows we start with the symplectic manifold $T^2\times \Sigma_g$ endowed with the product form. 

\begin{theorem} {\label{Theorem 22}}Let $g\geq 2$, and assume k to be a nonnegative integer. There exist twisted generalized complex non-symplectic 4-manifolds $X_{F, g, k}(i)$ and $X_{S, g, k}(j)$, which have $e(X_{F, g, k}) = e(X_{S, g, k}) = k$ and  $\sigma(X_{F, g, k}) = \sigma(X_{S, g, k}) = -k$ such that
\begin{itemize}
\item $\pi_1(X_{F, g, k}(i)) = \overbrace{\Z \ast \cdots \ast \Z}^g$
\item $\pi_1(X_{S, g, k}(j)) = \pi_1(\Sigma_g)$.
\end{itemize}
The twisted generalized complex structures on these manifolds can be chosen to have a number of $i\in \{1, \ldots, 2 +g\}$ and $j\in \{1, 2\}$ type change loci respectively.
\end{theorem}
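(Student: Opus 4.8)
\textbf{Proof proposal for Theorem \ref{Theorem 22}.}

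The plan is to mimic the construction strategy used throughout Sections \ref{Section 4}--\ref{Section 6}, starting from the symplectic manifold $T^2 \times \Sigma_g$ with its product symplectic form. First I would record the relevant topology: write $\pi_1(T^2) = \langle x, y\rangle$ and $\pi_1(\Sigma_g) = \langle c_1, d_1, \ldots, c_g, d_g \mid \prod_{i=1}^g [c_i, d_i]\rangle$, and identify the homologically essential Lagrangian tori of the form $x \times c_i$, $y \times c_i$ (and their duals), together with their Lagrangian push-offs and meridians, exactly as in the Baldridge--Kirk style computations (\cite{[BKS], [BK3]}) cited in Example \ref{Example 19}. I would perturb the symplectic form via \cite[Lemma 1.6]{[Go]} so that a chosen collection of these Lagrangian tori becomes symplectic, making them available as input for the $(p,q,0)$-surgery of Theorem \ref{Theorem 6}.

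Next I would carry out the two fundamental-group computations separately. For the free-group case $X_{F,g,k}$, the goal is to kill the generators $x$ and $y$ coming from the $T^2$ factor while leaving the surface generators $c_1, \ldots, c_g$ free and killing the $d_i$ together with the surface relation, so that $\pi_1 \cong \Z \ast \cdots \ast \Z$ (free of rank $g$). I would accomplish this by performing a suitable sequence of Luttinger surgeries (of type $(p,q,\pm 1)$) to enforce the needed commutation and triviality relations, followed by the genus-reducing $(p,q,0)$-surgeries of Theorem \ref{Theorem 6}, which simultaneously create the type change loci and equip $X_{F,g,k}$ with a twisted generalized complex structure. For the surface-group case $X_{S,g,k}$, the aim is instead to kill $x$ and $y$ but retain the full surface group $\langle c_1, d_1, \ldots \mid \prod [c_i,d_i]\rangle$, which requires surgering only along curves in the $T^2$-directions and being careful not to introduce relations among the $c_i, d_i$; this restricts us to $j \in \{1,2\}$ loci, which is why the two cases have a different admissible range for the number of type change loci. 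In each case I would track, via Lemma \ref{Lemma BK} and Seifert--van Kampen, which generators are set trivial and which relations are introduced, paying close attention to basepoints and conjugating paths as in the proof of Theorem \ref{Theorem 17}.

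Finally I would verify the remaining assertions. The Euler characteristic and signature are invariant under torus surgeries, so $e(X) = e(T^2 \times \Sigma_g) = 0$ and $\sigma(X) = 0$ before blow-ups; blowing up $k$ nondegenerate complex points (Theorem \ref{Theorem 7}) then yields $e = k$ and $\sigma = -k$ as claimed, while preserving the generalized complex structure. To rule out a symplectic structure, I would locate a surviving embedded sphere of nonnegative self-intersection with nontrivial homology class produced by a genus-reducing surgery and invoke the Adjunction Inequality (Theorem \ref{Theorem 9}) to force the Seiberg--Witten invariants to vanish, whence Taubes' theorem \cite{[Ta1]} excludes a symplectic form. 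The main obstacle I anticipate is the surface-group computation: ensuring that the surgeries kill $x, y$ while leaving the entire surface relation $\prod_{i=1}^g [c_i, d_i] = 1$ intact and introducing \emph{no} spurious relations among the $c_i, d_i$ is delicate, and it is precisely this constraint that limits $j$ to $\{1, 2\}$; careful bookkeeping of the meridional relations introduced by each surgery, in the spirit of \cite{[FPS], [Y]}, will be required.
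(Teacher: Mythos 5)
Your proposal follows essentially the same route as the paper: starting from $T^2 \times \Sigma_g$ with the Baldridge--Kirk description of the Lagrangian tori, killing $x,y$ (and, in the free case, half of the surface generators so that the relation $\prod[c_i,d_i]=1$ becomes vacuous) via a combination of Luttinger surgeries and the $(p,q,0)$-surgeries of Theorem \ref{Theorem 6}, then blowing up $k$ nondegenerate points to adjust $(e,\sigma)$, and ruling out symplectic structures via an embedded essential sphere, the adjunction inequality, and Taubes' theorem. You also correctly identify the reason the surface-group case only allows $j \in \{1,2\}$ type change loci, which matches the paper's construction.
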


The starting point of the construction is the manifold $T^2\times \Sigma_g$, the product of a 2-torus and a surface of genus g, equipped with the product symplectic form. We gather the topological properties of this manifold that will be used in the following result. 

\begin{lemma} {\label{Lemma 23}}The first homology group $H_1(T^2\times \Sigma_g; \mathbb{Z}) \cong \mathbb{Z}^{2g + 2}$ is generated by $x, y, a_1, b_1, \cdots, a_g, b_g$. The second homology $H_2(T^2\times \Sigma_g; \mathbb{Z}) \cong \mathbb{Z}^{4g + 2}$ is generated by $T^2\times \{s_g\} (\{s_g\} \in \Sigma_g), \{t\}\times \Sigma_g (\{t\}\in T^2)$, and the tori $x \times a_i, y\times a_i, x\times b_i, y\times b_i$ where $i = 1, 2, \cdots, g$. The fundamental group $\pi_1(T^2\times \Sigma_g - \bigcup_{i = 1} ^{2g} T_i)$ is generated by the elements $x, y, a_1, b_1, \cdots, a_g, b_g$, and the relations 
\begin{center}
$[x, a_i] = [y, a_i] = [y, b_i a_i b_i^{-1}] = 1$ for $i = 1, \cdots, g$, and $[x, y] = 1 = [a_1, b_1[a_2, b_2] \cdots [a_g, b_g]$
\end{center}
hold in this group. The tori with the corresponding meridian, and Lagrangian push offs are given by
\begin{itemize}
\item $T_1: m_1 = x, l_1 = a_1, \mu_1= [b_1^{-1}, y^{-1}]$,
\item $T_2: m_2 = y, l_2 = b_1a_1b^{-1}, \mu_2 = [x^{-1}, b_1]$,
\item $T_3: m_3 = x, l_3 = a_2, \mu_3 = [b_2^{-1}, y^{-1}]$,
\item $T_4: m_4 = y, l_4 = b_2a_2b_2^{-1}, \mu_4 = [x^{-1}, b_2]$,
\item $T_5: m_5 = x, l_5 = a_3, \mu_5 = [b_3^{-1}, y^{-1}]$,\\
\vdots
\item $T_{2g - 1}: m_{2g - 1} = x, l_{2g - 1} = a_{g - 1}, \mu_{2g - 1} = [b_g^{-1}, y^{-1}]$, and
\item $T_{2g}: m_g = y, l_g = b_g a_g b_g^{-1}, \mu_{2g} = [x^{-1}, b_g]$.\\
\end{itemize}
\end{lemma}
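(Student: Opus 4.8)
The plan is to verify Lemma \ref{Lemma 23} by assembling the homological and homotopical data of $T^2 \times \Sigma_g$ from the corresponding data of the two factors, and then to record the meridians and Lagrangian push-offs of the chosen tori. The statements split naturally into three independent parts: the homology computations, the fundamental-group presentation of the complement of the tori, and the explicit surgery data. Each can be handled by standard tools, so the lemma is essentially bookkeeping organized so as to match the conventions of \cite{[FPS], [BK3], [AP]}.

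First I would establish the homology statements via the K\"unneth formula. Since $H_*(T^2;\Z)$ and $H_*(\Sigma_g;\Z)$ are torsion-free, $H_1(T^2\times\Sigma_g;\Z)\cong H_1(T^2;\Z)\oplus H_1(\Sigma_g;\Z)\cong \Z^{2}\oplus\Z^{2g}$, giving the generators $x,y,a_1,b_1,\dots,a_g,b_g$. For the second homology, K\"unneth gives $H_2(T^2\times\Sigma_g;\Z)\cong (H_2(T^2)\otimes H_0(\Sigma_g))\oplus(H_1(T^2)\otimes H_1(\Sigma_g))\oplus(H_0(T^2)\otimes H_2(\Sigma_g))$, which has rank $1+4g+1=4g+2$; the middle summand is spanned by the product tori $x\times a_i,\,y\times a_i,\,x\times b_i,\,y\times b_i$, and the outer summands by $T^2\times\{s_g\}$ and $\{t\}\times\Sigma_g$. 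I would note that these generators realize the even intersection form as an orthogonal sum of hyperbolic pairs, exactly as in Proposition \ref{Proposition 10}.

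Next I would compute $\pi_1(T^2\times\Sigma_g - \bigcup_i T_i)$. The ambient group is $\pi_1(T^2)\times\pi_1(\Sigma_g)$, so $x,y$ are central and the $a_i,b_i$ satisfy the single surface relation $\prod_i[a_i,b_i]=1$ together with $[x,y]=1$. Removing the Lagrangian tori introduces the meridians $\mu_i$ as the only new loops, and I would invoke the analysis of Baldridge--Kirk \cite{[BK3], [BKS]} (as already used in Example \ref{Example 19} and Proposition \ref{Proposition 15}) to see that each meridian is expressible as a commutator in the already-present generators; this is what yields the relations $[x,a_i]=[y,a_i]=[y,b_ia_ib_i^{-1}]=1$ and keeps the generating set equal to that of $\pi_1(T^2\times\Sigma_g)$. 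The meridian and push-off formulae in the displayed list are then read off directly from the product structure: each $T_{2i-1}=x\times a_i$ carries $m=x$, $l=a_i$, and $T_{2i}=y\times(b_ia_ib_i^{-1})$ carries $m=y$, $l=b_ia_ib_i^{-1}$, with meridians the corresponding commutators, precisely mirroring the $g=1$ computation of Proposition \ref{Proposition 15}.

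I expect the main obstacle to be the fundamental-group part, specifically the precise identification of the based meridians and Lagrangian push-offs with particular commutators and conjugates. The homology statements are mechanical, but getting the homotopy-theoretic decorations right requires a consistent choice of basepoint and of paths to the boundaries of the tubular neighborhoods of the $T_i$, so that the push-off $l_2=b_1a_1b_1^{-1}$ (rather than simply $a_1$) and the commutator form of each $\mu_i$ come out correctly. Rather than reproduce this delicate basepoint analysis, I would cite the detailed treatment of Fintushel--Park--Stern \cite[Section 4]{[FPS]} and Baldridge--Kirk \cite{[BK3]}, from which the present case is obtained by the same argument applied to the product $T^2\times\Sigma_g$; the routine verifications of the submanifold and intersection-form claims I would leave to the reader, as is done for Proposition \ref{Proposition 11}.
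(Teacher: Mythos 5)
Your proposal is correct and follows essentially the same route as the paper: the paper omits the proof entirely, declaring it a straightforward generalization of Baldridge--Kirk \cite[Proposition 7]{[BK3]} (compare Proposition \ref{Proposition 11}), which is precisely the reduction you make after dispatching the homology by K\"unneth. Your added detail (the K\"unneth bookkeeping and the deferral of the basepoint/push-off analysis to \cite{[FPS]} and \cite{[BK3]}) fills in exactly what the paper leaves implicit, so there is nothing to flag.
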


The proof of this lemma is omitted. It is an straight-forward generalization of  \cite[Proposition 7]{[BK3]}; compare with Proposition \ref{Proposition 11}. The proof for both instances of the theorem consists of building a generalized complex 4-manifold with trivial Euler characteristic, trivial signature, and with the desired fundamental group. Then one concludes by blowing up $k$ points on such a manifold (Theorem \ref{Theorem 7}). Let us start with the case of surface groups.

\begin{proof} (Theorem 22). Perturb the product symplectic form of $T^2\times \Sigma_g$ \cite[Lemma 1.6]{[Go]} so that $T_1$ and $T_2$ become symplectic. Denote by $X_{S, g}(2)$ the twisted generalized complex 4-manifold obtained from $T_2\times \Sigma_g$ by applying $(1, 0, 0)$-surgeries on $T_1$ and on $T_2$. Theorem \ref{Theorem 6} implies that $X_{S, g}(2)$ admits a twisted generalized complex structure with two type change loci. We have $e(X_{S, g}(2)) = 0 = \sigma(X_{S, g}(2))$. Indeed, the Euler characteristic and the signature remain invariant under torus surgeries, so $e(X_{S, g}(2)) = e(T^2\times \Sigma_g) = 0$ and $\sigma(X_{S, g}(2)) = e(T^2\times \Sigma_g) = 0$. 
Moreover, the surgeries set $x = 1 = y$. Thus,
\begin{center}
$\pi_1(X_{S, g}(2)) = \pi_1(\Sigma_g)$.
\end{center}

The manifold $X_{S, g, k}(2)$ is a twisted generalized complex 4-manifold obtained by blowing up $X_{S, g}(2)$ at $k$ non-degenerate points (Theorems \ref{Theorem 6} and \ref{Theorem 7}). A straight-forward computation yields $e(X_{S, g, k}(2)) = k, \sigma(X_{S, g, k}(2)) = -k$. Since blow ups do not alter the fundamental group, we have $\pi_1(X_{S, g, k}(2)) = \pi_1(\Sigma_g)$. The twisted generalized complex structure with one type change locus is built by doing $(1, 0, 1)$-surgery on the Lagrangian $T_1$, and a $(1, 0, 0)$-surgery on the symplectic $T_2$.

Let us consider the case of free fundamental groups of rank $g$. We claim that there exists a  twisted generalized complex 4-manifold $X_{F, g}(i)$ with $i$ type change loci, with trivial Euler characteristic, trivial signature, and whose fundamental group is generated by $b_1, b_2, \ldots, b_g$ so that
\begin{center}
$\pi_1(X_{F, g}(i)) = \Z b_1 \ast \cdots \ast \Z b_g$.
\end{center}

The manifold $X_{F, g, k}(i)$ is obtained by blowing up $X_{F, g}(i)$ at $k$ non-degenerate points  (Theorem \ref{Theorem 7}). As in the previous case, by a straight-forward computation, one checks $e(X_{F, g, k}(i)) = k, \sigma(X_{F, g, k}(i)) = -k$, and
\begin{center}
$\pi_1(X_{F, g, k}) =  \overbrace{\Z \ast \cdots \ast \Z}^g$.
\end{center}

The twisted generalized complex manifold $X_{F, g}(i)$ is constructed as follows. We work out the case of one single type change locus (i = 1); the other cases follow from a small variation of the argument as in the surface groups case that was discussed before.
Perturb the symplectic form on $T^2\times \Sigma_g$ so that the torus $T_2$ becomes symplectic. Perform $(0, 1, -1)$-surgeries on $T_1$, $(1, 0, -1)$-surgery on $T_3$, and $(0, 1, -1)$-surgeries on $T_j$ for $4\leq j \leq g + 2$. Peform $(1, 0, 0)$-surgery on $T_2$.
The relation induced by the last surgery is $y = 1$; by using this on the relations introduced by the Luttinger surgeries, one sees $x = a_1 = a_2 =\cdots = a_g = 1$. This implies
\begin{center}
$\pi_1(X_{F, g}(1)) = \mathbb{Z} b_1 \ast \cdots \ast \mathbb{Z} b_g$.
\end{center}
By Theorem \ref{Theorem 6}, $X_{F, g}(1)$ is a twisted generalized complex manifold, and it has trivial Euler characteristic and signature zero.

The triviality of the Seiberg-Witten invariants of any manifold constructed above follows from Theorem \ref{Theorem 9}, by keeping track of the submanifolds on Lemma \ref{Lemma 23} and observing the existence of an imbedded 2-sphere of self-intersection zero.
\end{proof}

\section{On further research}\label{Section 7}

We finish the paper with two interesting questions.


\begin{question}\label{Question 1} What are the sufficient conditions for a manifold to admit a generalized complex structure in dimension four?
\end{question}

\begin{question}\label{Question 2} What is the relation between an almost-complex structure and a generalized complex structure on a given smooth 4-manifold?
\end{question}

\section{acknowledgments}

The author is indebted to Gil Cavalcanti, to Marco Gualtieri and to Nigel Hitchin for fruitful conversations that led to the writing of the present paper. Special thanks are due to Jonathan Yazinski for useful comments on an earlier draft. We thank Ronald Fintushel, Robert E. Gompf, Paul Kirk, Jonathan Normand, and Ronald J. Stern for helpful e-mail exchanges, useful discussions, and/or for their patience answering questions. Their expertise was of significant value in the production of the manuscript. We thank Utrecht University and University of Toronto for their hospitality during the production of the paper.\\ 

The Simons Foundation is gratefully acknowledged for the support under which this work was carried out.

\end{document}